\newtheoremstyle{mystyle}{}{}{}{}{\bf}{}{\newline}{}
\theoremstyle{mystyle}
\numberwithin{equation}{section}
\theoremstyle{plain}
\newtheorem{thm}{Theorem}[section]
\newtheorem{dfn}[thm]{Definition}
\newtheorem{lem}[thm]{Lemma}
\newtheorem{pps}[thm]{Proposition}
\theoremstyle{remark}
\newtheorem{rmk}[thm]{Remark}
\newenvironment{exm}
  {\pushQED{\qed}\renewcommand{\qedsymbol}{$\triangle$}\examplex}
  {\popQED\endexamplex}
\declaretheoremstyle[
  spaceabove=-6pt,
  spacebelow=6pt,
  headfont=\normalfont\itshape,
  postheadspace=1em,
  qed=\qedsymbol,
  headpunct={}
]{mystyle}
\newcommand{\Z}{\mathds{Z}}
\newcommand{\C}{\mathds{C}}
\newcommand{\Q}{\mathds{Q}}
\newcommand{\F}{\mathds{F}}
\newcommand{\N}{\mathds{N}}
\newcommand{\R}{\mathds{R}}
\newcommand{\G}{\mathbf{G}}
\newcommand{\m}{\mathbf{m}}
\newcommand{\p}{\mathfrak{p}}
\newcommand{\Pp}{\mathfrak{P}}
\newcommand{\ri}{\mathcal{O}}
\newcommand{\lri}{\mathfrak{o}}
\newcommand{\Lri}{\mathfrak{O}}
\newcommand{\g}{\mathfrak{g}}
\newcommand{\lrip}{\lri/\p^N}
\newcommand{\ric}{\ri_{\p}}
\newcommand{\W}{W^{\lri}}
\newcommand{\f}{\mathfrak{f}}
\newcommand{\z}{\mathfrak{z}}
\newcommand{\Hom}{\text{Hom}}
\newcommand{\pints}{\mathscr{Z}}
\newcommand{\pint}{\mathscr{Z}^{\lri}}
\newcommand{\Pint}{\mathscr{Z}^{\Lri}}
\newcommand{\cczeta}{\zeta^\textup{cc}}
\newcommand{\rzeta}{\zeta^\textup{irr}}
\newcommand{\rtzeta}{\zeta^{\widetilde{\textup{irr}}}}
\newcommand{\kzeta}{\zeta^\textup{k}}
\newcommand{\rg}{\widetilde{r}}
\newcommand{\lvlzf}{\mathcal{Z}}
\newcommand{\rlvlzf}{\mathcal{Z}^\textup{irr}}
\newcommand{\clvlzf}{\mathcal{Z}^\textup{cc}}
\newcommand{\astlvlzf}{\lvlzf^{\ast}}
\newcommand{\astzeta}{\zeta^{\ast}}
\newcommand{\ddkzeta}{\zeta_{{K}}}
\newcommand{\rk}{\textup{rk}}
\newcommand{\rad}{\textup{Rad}(B_{\omega}^{N})}
\newcommand{\vry}{\nu(\mathcal{R}(\mathbf{y}))}
\newcommand{\re}{\text{Re}}
\newcommand{\gn}{\mathfrak{g}_{N}}
\newcommand{\gnc}{\mathfrak{g}'_{N}}
\newcommand{\zn}{\mathfrak{z}_{N}}
\newcommand{\Nr}{\mathcal{N}^{\mathcal{R}}}
\newcommand{\Na}{\mathcal{N}^{A}}
\newcommand{\Nb}{\mathcal{N}^{B}}
\newcommand{\ua}{u_{A}}
\newcommand{\ub}{u_{B}}
\newcommand{\ur}{u_{\mathcal{R}}}
\newcommand{\cc}{{c}}
\newcommand{\irrup}{\textup{irr}}
\newcommand{\ccup}{\textup{cc}}
\newcommand{\ad}{\textup{ad}}
\newcommand{\Ker}{\textup{Ker}}
\newcommand{\specp}{\textup{Spec}(\ri)\setminus\{(0)\}}
\newcommand{\Gl}{\textup{GL}}
\newcommand{\Irr}{\textup{Irr}}
\newcommand{\Frac}{\textup{Frac}}
\newcommand{\Dr}{\mathcal{D}}
\newcommand{\Dl}{\mathcal{D}_{\leq}}
\newcommand{\Poin}{\mathcal{P}^{\lri}}
\newcommand{\thickhline}{
    \noalign {\ifnum 0=`}\fi \hrule height 1pt
    \futurelet \reserved@a \@xhline
}
\newcolumntype{"}{@{\hskip\tabcolsep\vrule width 1pt\hskip\tabcolsep}}
\renewcommand*\env@matrix[1][*\c@MaxMatrixCols c]{
  \hskip -\arraycolsep
  \let\@ifnextchar\new@ifnextchar
  \array{#1}}
\DeclareMathOperator{\Mat}{Mat}
\DeclareMathOperator{\class}{k}
\renewcommand{\@secnumfont}{\bfseries}
\patchcmd{\section}{\scshape}{\bf}{}{}
\patchcmd{\subsubsection}{\scshape}{\bf}{}{}
\title[paper]{Bivariate representation and conjugacy class zeta functions associated to unipotent group schemes, I: Arithmetic properties \vspace{-3ex}}
\author{Paula Macedo Lins de Araujo}
\address{Fakult\"at f\"ur Mathematik, Universit\"at Bielefeld, Postfach 100131, 33501 Bielefeld, Germany}
\email{lins@math.uni-bielefeld.de}
\date{\today}
\subjclass[2010]{11M41, 11M32, 20F69, 20D15, 22E55, 20E45.}
\keywords{Finitely generated nilpotent groups, zeta functions, conjugacy classes, irreducible complex characters, Kirillov orbit method, $p$-adic integration.}
\begin{document}
\begin{abstract}
This is the first of two papers in which we introduce and study two bivariate zeta functions associated to unipotent group schemes over rings of integers of number fields. 
One of these zeta functions encodes the numbers of isomorphism classes of irreducible complex representations of finite dimensions of 
congruence quotients of the associated group and the other one encodes the numbers of conjugacy classes of each size of such quotients. 
In this paper, we show that these zeta functions satisfy Euler factorisations and almost all of their Euler factors are rational and satisfy  functional equations.
Moreover, we show that such bivariate zeta functions specialize to (univariate) class number zeta functions. 
In case of nilpotency class~$2$, bivariate representation zeta functions also specialize to (univariate) twist representation zeta functions.
\end{abstract}
\maketitle \vspace{-0.5cm}
\thispagestyle{empty}

\section{Introduction and statement of main results}
\subsection{Introduction}\label{intro}
Let~$G$ be a group and, for $n \in \N$, write
\begin{align*}
  r_n(G)&=|\{\text{isomorphism classes of $n$-dimensional irreducible complex}\\
  &\phantom{r(G)}\text{representations of } G\}|,\\
  c_n(G)&=|\{\text{conjugacy classes of $G$ of cardinality }n\}|.
\end{align*}
If~$G$ is a topological group, we only consider continuous representations. 

We study the bivariate zeta functions of groups associated to unipotent group schemes encoding either the numbers $r_n(Q)$ or the numbers $c_n(Q)$ of certain finite quotients~$Q$ of the infinite groups considered. 
We first recall the definitions of (univariate) representation and conjugacy class zeta functions.
\begin{dfn}\label{univa} Let~$G$ be a group and $s$ a complex variable.
\begin{enumerate} 
\item If all~$r_n(G)$ are finite, then the \emph{representation zeta function} of~$G$ is
\[\rzeta_{G}(s)=\sum_{n=1}^{\infty}r_n(G)n^{-s}. \]
\item If all~$c_n(G)$ are finite, then the \emph{conjugacy class zeta function} of~$G$ is 
\[\cczeta_{G}(s)=\sum_{n=1}^{\infty}\cc_n(G)n^{-s}.\]
\end{enumerate}
\end{dfn}
The groups considered in the present paper are groups associated to unipotent group schemes which are obtained from nilpotent Lie lattices; see Section~\ref{groups}. 
From here on, let~$K$ denote a number field and~$\ri$ its ring of integers. 
Let~$\G$ be a unipotent group scheme over~$\ri$. The group~$\G(\ri)$ is a finitely generated, torsion-free nilpotent group ($\mathcal{T}$-group for short); 
see~\cite[Section~2.1.1]{StVo14}.

We observe that for a $\mathcal{T}$-group~$G$ the numbers $r_n(G)$ and $c_n(G)$ are not all finite. For this reason, one cannot define representation and conjugacy class zeta functions of~$G$ as in Definition~\ref{univa}. 
In the representation case, many authors have overcome this by considering zeta functions encoding the $n$-dimensional irreducible complex representations of such groups up to tensoring by one-dimensional representations; see Section~\ref{app2}. 

Our idea is to investigate zeta functions encoding the relevant data $r_n(Q)$ or $c_n(Q)$ of principal congruence quotients~$Q$ of~$\G(\ri)$. We define bivariate complex functions: one variable concerns either the dimensions of the representations considered or the cardinalities of the conjugacy classes considered, and the other variable concerns the principal congruence subgroups. 
\begin{dfn}\label{biva} The \emph{bivariate representation} and the \emph{bivariate conjugacy class zeta functions} of $\G(\ri)$ are
\begin{align*} 
\rlvlzf_{\G(\ri)}(s_1,s_2)&=\sum_{(0) \neq I \unlhd \ri }\rzeta_{\G(\ri/I)}(s_1)|\ri:I|^{-s_2}\text{ and}\\
\clvlzf_{\G(\ri)}(s_1,s_2)&=\sum_{(0) \neq I \unlhd \ri }\cczeta_{\G(\ri/I)}(s_1)|\ri:I|^{-s_2},
\end{align*}
respectively, where $s_1$ and $s_2$ are complex variables. 
\end{dfn}
These series converge for $s_1$ and $s_2$ with sufficiently large real parts; see Section~\ref{conv}.

We remark that the zeta functions defined above depend not only on the group of $\ri$-points $\G(\ri)$, but implicitly on the ring~$\ri$ and the
$\ri$-scheme $\G$ which give rise to the finite quotients $\G(\ri/I)$, modulo ideals~$I$ of~$\ri$.  It is convenient and customary to use the short notation $\G(\ri)$ in the index to indicate this more complex dependency.

In Proposition~\ref{factor}, we establish the Euler decompositions
\[\astlvlzf_{\G(\ri)}(s_1,s_2)=\prod_{\p \in \specp} \astlvlzf_{\G(\ric)}(s_1,s_2),\]
where $\ast\in\{\irrup,\ccup\}$ and~$\ric$ is the completion of~$\ri$ at the nonzero prime ideal~$\p$. When considering a fixed prime ideal~$\p$, we write simply $\lri=\ric$
and $\G_N:=\G(\lrip)$. With this notation, the local factor at~$\p$ is given by 
\begin{align}\label{localfactors}\astlvlzf_{\G(\ric)}(s_1,s_2)=\astlvlzf_{\G(\lri)}(s_1,s_2)= \sum_{N=0}^{\infty}\astzeta_{\G_N}(s_1)|\lri:\p|^{-Ns_2}.\end{align}
\begin{exm} Let~$\G(\ri)$ be the free Abelian torsion-free group~$\ri^m$, and let~$\p$ be a nonzero prime ideal of~$\ri$ with $q=|\ri:\p|$. Then, for $N \in \N_0=\N\cup \{0\}$, we have 
\[r_{q^i}(\G_N)=\cc_{q^i}(\G_N)=\begin{cases}
                                                       q^{mN},&\text{ if }i=0,\\
                                                       0,&\text{ otherwise.}
                                                      \end{cases}
\]
Therefore, for $\ast \in \{\irrup,\ccup\}$,
\[\astlvlzf_{\G(\lri)}(s_1,s_2)=\astlvlzf_{\lri^{m}}(s_1,s_2)=\sum_{N=0}^{\infty}q^{N(m-s_2)}=\frac{1}{1-q^{m-s_2}}.\]
Consequently $\astlvlzf_{\ri^m}(s_1,s_2)=\ddkzeta(s_2-m)$ with $\ddkzeta(s)$ denoting the Dedekind zeta function of the number field~$K$.
Moreover, the local factor at~$\p$ satisfies the functional equation
\[\astlvlzf_{\lri^{m}}(s_1,s_2)\mid_{q\to q^{-1}}=-q^{m-s_2}\astlvlzf_{\lri^{m}}(s_1,s_2).\qedhere\]
\end{exm}

Certain zeta functions of groups related to representations---or the local factors of such functions---are known to be rational functions satisfying functional equations; for instance, representation zeta functions of certain pro-$p$ groups~\cite[Theorem~A]{AKOV13}, and local factors of twist representation zeta functions---see Section~\ref{app2}---of groups of the form $\G(\ri)$~\cite[Theorem~A]{StVo14}, where $\G$ is a unipotent group scheme obtained from a nilpotent $\ri$-Lie lattice. 

As for zeta functions related to conjugacy classes, the so-called class number zeta functions---see Section~\ref{app1}---of certain groups are rational; for instance the local factors of class number zeta functions of Chevalley groups $G(\lri)$~\cite[Theorem~C]{BDOP}, where $\lri$ is the valuation ring of a non-Archimedean local field of any (sufficiently large) characteristic, and class number zeta functions of compact $p$-adic analytic groups~\cite[Theorem~1.2]{duSau05}. 

This motivates our main result, which concerns the above mentioned features for the local factors of bivariate representation and conjugacy class zeta functions of groups of the form~$\G(\ri)$ obtained from nilpotent Lie lattices; see Section~\ref{groups}. 

\begin{thm}\label{thmA} Let $\ri$ be the ring of integers of a number field~$K$, and let $\G$ be a unipotent group scheme obtained from a nilpotent $\ri$-Lie lattice~$\Lambda$. For each $\ast \in\{\textup{irr},\textup{cc}\}$, there exist a positive integer $t^{\ast}$ and a rational function 
\[R^{\ast}(X_1, \dots, X_{t^{\ast}}, Y_1,Y_2)\text{ in } \Q(X_1, \dots, X_{t^{\ast}},Y_1,Y_2)\] 
such that, for all but finitely many nonzero prime ideals $\p$ of $\ri$, there exist algebraic integers $\lambda_{1}^{\ast}(\p)$, $\dots$, 
 $\lambda_{t^{\ast}}^{\ast}(\p)$ for which the following holds. For any finite extension $\Lri$ of $\lri:=\ric$ with relative degree of inertia $f=f(\Lri,\lri)$, 
 \[\lvlzf_{\G(\Lri)}^{\ast}(s_1,s_2)=R^{\ast}(\lambda_{1}^{\ast}(\p)^f, \dots, \lambda_{t^{\ast}}^{\ast}(\p)^f,q^{-fs_1},q^{-fs_2}),\]
 where $q=|\ri:\p|$. 
Moreover, inverting parameters in the rational function $R^{\ast}$ yields that these local factors satisfy the functional equation
\[\astlvlzf_{\G(\Lri)}(s_1,s_2)\mid_{\substack{q \rightarrow q^{-1}\\ \lambda_{j}^{\ast}(\p) \rightarrow \lambda_{j}^{\ast}(\p)^{-1}}}=-q^{f(h-s_2)}\astlvlzf_{\G(\Lri)}(s_1,s_2),\] 
 where $h=\dim_K(\Lambda \otimes K)$. 
 \end{thm}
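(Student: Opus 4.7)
The plan is to reduce each local factor in~\eqref{localfactors} to a $\p$-adic integral and then appeal to the combined rationality and functional-equation machinery of Denef and Voll. For the representation factor, the Kirillov orbit method for finite nilpotent $p$-groups -- valid for all but finitely many $\p$ -- identifies $\rzeta_{\G_N}(s_1)$ with a weighted sum over coadjoint orbits in the Pontrjagin dual of the Lie lattice modulo $\p^N$, where orbit sizes and hence representation dimensions are controlled by ranks and Pfaffians of minors of the commutator matrix $\mathcal{R}(\mathbf{y})$ of $\Lambda$, evaluated at points $\mathbf{y}$ in that dual. An entirely analogous treatment of conjugacy classes via adjoint orbits and elementary divisors of $\mathcal{R}(\mathbf{y})$ produces a parallel expression for $\cczeta_{\G_N}(s_1)$. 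Incorporating the additional weight $|\lri:\p|^{-Ns_2}$ and summing over $N$ then packages each bivariate local factor as a single $\p$-adic integral over a space built from $\Lambda \otimes \lri$ and its dual, with integrand a product of $\p$-adic absolute values of the relevant Pfaffian/minor polynomials together with an extra valuation factor tracking the level~$N$.

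With the integral representation in hand, I would apply Denef's rationality theorem: resolving the singularities of the ideal generated by all polynomials appearing in the integrand yields an explicit expression for $\astlvlzf_{\G(\lri)}(s_1,s_2)$ as a rational function in $q^{-s_1}$, $q^{-s_2}$, and finitely many $\F_q$-point counts on strata of the resolution. For all but finitely many $\p$ -- those of good reduction for a fixed $\Z$-model of the resolution -- these counts become alternating sums of traces of Frobenius on $\ell$-adic cohomology with eigenvalues $\lambda_j^{\ast}(\p)$. Replacing $\lri$ by an unramified extension $\Lri$ of residue degree~$f$ then sends $q \mapsto q^f$ and $\lambda_j^{\ast}(\p) \mapsto \lambda_j^{\ast}(\p)^f$ via Grothendieck's trace formula, giving the uniform rational function $R^{\ast}$.

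For the functional equation I would follow Voll's strategy: after resolution, the integrand becomes monomial in the exceptional divisors, and Denef's formula takes the shape of a sum over faces of a rational polyhedral fan with contributions weighted by stratum point counts. The involution $q \mapsto q^{-1}$, $\lambda_j^{\ast}(\p) \mapsto \lambda_j^{\ast}(\p)^{-1}$ is then realised by combining the Weil-conjecture self-duality of Frobenius eigenvalues on smooth projective strata with a combinatorial involution on the cones of the fan that exchanges each cone with its complementary face. The monomial prefactor $-q^{f(h-s_2)}$ with $h = \dim_K(\Lambda \otimes K)$ encodes the global dimension, together with the single linear shift in $s_2$ contributed by the level factor $|\lri:\p|^{-Ns_2}$ absorbed into the integral.

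The principal obstacle will be engineering the local integral so that a single resolution simultaneously monomialises every Pfaffian and minor ideal appearing in the integrands for \emph{both} $\ast = \irrup$ and $\ast = \ccup$, and verifying that the $s_2$-variable -- a genuinely new feature absent from the univariate theory -- contributes only a linear monomial shift under the functional equation, rather than disrupting its palindromic structure. Careful tracking of the level data through the resolution and its point-counting consequences will be essential throughout.
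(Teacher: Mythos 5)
Your proposal follows essentially the same route as the paper: Kirillov orbit method (and its conjugacy-class analogue via the Lazard correspondence) to express $r_{q^i}(\G_N)$ and $c_{q^i}(\G_N)$ through minors of the commutator matrices, packaging into a Poincaré series and then a $\p$-adic integral with an extra valuation term tracking the level~$N$, and finally citing the Denef--Voll resolution machinery from~\cite{Vo10, AKOV13, StVo14} for rationality, uniformity under unramified extension via Frobenius eigenvalues, and the functional equation via the cone/Weil-duality involution. One small point: your stated concern about engineering a \emph{single} resolution that monomialises the relevant ideals for both $\ast=\irrup$ and $\ast=\ccup$ is moot, since the two cases use separate commutator matrices ($B$ and $A$) and are handled by entirely independent integrals, each with its own resolution.
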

The statement of Theorem~\ref{thmA} is analogous to~\cite[Theorem~A]{StVo14}, and its proof heavily relies on the techniques of~\cite{AKOV13, StVo14}; see Section~\ref{pthmA}. The main tools used in the proof of Theorem~\ref{thmA} are the Kirillov orbit method, the Lazard correspondence, and $\p$-adic integration.

Since the rational functions~$R^{\irrup}$ and $R^{\ccup}$ of Theorem~\ref{thmA} only depend on the $\ri$-scheme $\G$, it follows that almost all local factors $\astlvlzf_{\G(\ric)}(s_1,s_2)$ only depend on~$\G$ and the chosen prime ideal~$\p$ of~$\ri$, not on the group of $\ric$-points $\G(\ric)$. We keep the notation $\astlvlzf_{\G(\ric)}(s_1,s_2)$ to make clear which ring of integers and prime ideal are being considered. 
%

\begin{rmk}\label{rmk:HrMa} 
Local multivariate zeta functions counting the number of equivalence classes in some uniformly definable family of equivalence relations are known to be rational functions; see~\cite[Theorem~1.3]{HrMa}. As an application of this theorem, the authors prove in~\cite[Section~8]{HrMa} rationality for all local factors of twist representation zeta functions of~$\mathcal{T}$-groups, partially extending~\cite[Theorem~A]{StVo14}. Nevertheless, the techniques of~\cite{HrMa} do not assure that these local factors satisfy functional equations, as in~\cite[Theorem~A]{StVo14} or in Theorem~\ref{thmA}. We hope that the methods of~\cite{HrMa} can be applied to the bivariate zeta functions defined here to show rationality of all of their local factors. We point out that if $\G(\ri)$ is a $\mathcal{T}$-group as in Theorem~\ref{thmA} which has nilpotency class~$2$, then all local factors of $\rlvlzf_{\G(\ri)}(s_1,s_2)$ are rational functions; see Remark~\ref{condition2} and Proposition~\ref{intpadic}. 
\end{rmk}

\begin{rmk}
The author is not aware whether the results of Theorem~\ref{thmA} remain true for groups associated to unipotent group schemes in positive characteristic or for arithmetic groups associated to non-unipotent group schemes, since the techniques used here to prove Theorem~\ref{thmA} do not apply in such cases. 
However one could obtain a positive answer for the former question if the techniques of~\cite{HrMa} do apply for these bivariate zeta functions, in which case~\cite[Corollary~6.8]{HrMa} would assure that almost all local factors of the bivariate zeta functions of~$\G(\F_p[t])$ are also rational, where $\F_p$ is the field with~$p$ elements. 
\end{rmk}

Some applications of bivariate zeta functions are given below in Sections~\ref{app1} and~\ref{app2}. Specifically, we obtain results on previously studied (univariate) zeta functions by specializing the bivariate zeta functions introduced here. 

It would be interesting to understand which other kinds of information one can extract from bivariate zeta functions. 
In~\cite{PL18II}, we explicitly compute bivariate zeta functions of three infinite families of nilpotent groups. As a consequence, we obtain explicit formulae for two (univariate) zeta functions of these groups. 
We also provide an application in combinatorics: the formulae for bivariate representation zeta functions of these groups are shown to be related to statistics of certain Weyl groups, leading to formulae for joint distributions of three statistics; see~\cite[Propositions~5.5 and 5.6]{PL18II}. 

\subsection{Application 1: class number zeta functions}\label{app1}
An advantage of the study of the bivariate zeta functions of Definition~\ref{biva} is that they can be used to investigate (univariate) class number zeta functions, which encode the class numbers of principal congruence quotients of the groups considered.
Recall that the \emph{class  number}~$\class(G)$ of a finite group $G$ is the number of its conjugacy classes or, equivalently, the number of its irreducible complex characters. In particular, $\class(G)=\cczeta_G(0)=\rzeta_G(0)$.

\begin{dfn} The \emph{class number zeta function} of the $\mathcal{T}$-group $\G(\ri)$ is
\[\kzeta_{\G(\ri)}(s)=\sum_{(0) \neq I \unlhd \ri}\class(\G(\ri/I))|\ri:I|^{-s},\]
where $s$ is a complex variable.
\end{dfn}

As for the bivariate zeta functions of Definition~\ref{biva}, the class number zeta function defined above depend not only
on the group of $\ri$-points $\G(\ri)$, but also on~$\ri$ and the $\ri$-scheme $\G$. We adopt the notation $\G(\ri)$ in the index to indicate this more complex dependency.

The term `conjugacy class zeta function' is sometimes used for what we call `class number zeta function'; see, for instance,~\cite{BDOP,Ro17,ask2, duSau05}.

Clearly,
\begin{align}\label{specialization}
\rlvlzf_{\G(\ri)}(0,s)=\clvlzf_{\G(\ri)}(0,s)=
\kzeta_{\G(\ri)}(s).
\end{align}

A consequence of Theorem~\ref{thmA} is that almost all local factors of the class number zeta function of $\G(\ri)$ are rational in $\lambda_i(\p)$, $q$, and $q^{-s}$ and 
behave uniformly under base extension. Moreover, for a finite extension $\Lri$ of $\lri$ with relative degree of inertia $f=f(\Lri,\lri)$, the local factors satisfy 
the functional equation 
\[\kzeta_{\G(\Lri)}(s)\mid_{\substack{q \rightarrow q^{-1}\\ \lambda_{j}^{\ast}(\p) \rightarrow \lambda_{j}^{\ast}(\p)^{-1}}}=-q^{f(h-s)}\kzeta_{\G(\Lri)}(s). \]

Rossmann showed independently in~\cite{Ro17} that class number zeta functions of certain nilpotent groups $G \leq \Gl_d(\ric)$ are rational functions and satisfy functional equations. This is a consequence of~\cite[Theorems~4.10 and~4.18]{Ro17} together with
the specialization of ask zeta functions to class number zeta functions given in~\cite[Theorem~1.7]{Ro17}. 

\subsection{Application 2: twist representation zeta functions}\label{app2}
A $\mathcal{T}$-group of nilpotency class~$c=2$ is called a $\mathcal{T}_2$-group. 
The bivariate representation zeta function of a $\mathcal{T}_2$-group $\G(\ri)$ specializes to its twist representation zeta function, whose definition we now recall.

Nontrivial $\mathcal{T}$-groups have infinitely many one-dimensional irreducible complex representations. 
For this reason, one cannot define the representation zeta function of a $\mathcal{T}$-group $G$ as in Definition~\ref{univa}. 
Instead, for a $\mathcal{T}$-group~$G$, one considers equivalence classes on the set of its irreducible complex representations:
two representations $\rho$, $\sigma$ of~$G$ are called~\emph{twist-equivalent} if there exists a one-dimensional 
representation $\chi$ of $G$ such that $\rho \cong \chi \otimes \sigma$. This is an equivalence relation on the set of irreducible complex representations of~$G$ whose equivalence classes are called \emph{twist-isoclasses}. 
Let $\rg_n(G)$ be the number of twist-isoclasses of $n$-dimensional irreducible complex representations of~$G$. If~$G$ is a topological group, we only consider continuous representations. 
The $\rg_n(G)$ are all finite, see~\cite[Theorem~6.6]{LuMa85}. 
\begin{dfn} The \emph{twist representation zeta function} of a $\mathcal{T}$-group $G$ is 
 \[\rtzeta_G(s)=\sum_{n=1}^{\infty}\rg_n(G)n^{-s},\] where $s$ is a complex variable.
\end{dfn}
Twist representation zeta functions of $\mathcal{T}$-groups have been previously investigated, for instance, in~\cite{DuVo14, HrMa, Ro17comp, StVo14, Vo10}.
Explicit examples of (local factors of) twist representation zeta functions of $\mathcal{T}$-groups can be found in~\cite{Ezzphd,Ro17comp,Snophd,StVo14, StVo17}.

Let $\G(\ri)$ be a $\mathcal{T}_2$-group obtained from a unipotent $\ri$-Lie lattice~$\Lambda$ as explained in Section~\ref{groups}.
In Section~\ref{irr}, we show that the bivariate representation zeta function of $\G(\ri)$ specializes to its twist representation zeta function as follows.
For a fixed nonzero prime ideal~$\p$ of~$\ri$, let $\g=\Lambda\otimes_{\ri}\ric$ and let~$\g'$ be the derived Lie sublattice of~$\g$. 
Denote by~$r$ the torsion-free rank of $\g/\g'$. Then Proposition~\ref{repzeta} states
\begin{equation}\label{specializationirr}\prod_{\p \in \specp}\left((1-q^{r-s_2})\rlvlzf_{\G(\ric)}(s_1,s_2)\mid_{\substack{s_1\to s-2\\ s_2\to r\phantom{-2}}}\right)=
\rtzeta_{\G(\ri)}(s),\end{equation}
provided both the left-hand side and the right-hand side converge.

No specialization of the form~\eqref{specializationirr} is expected to exist in case of nilpotency class~$c>2$; see~\cite[Section~3.3]{PLphd} for details. 
In Section~\ref{irr}, we exhibit a $\mathcal{T}$-group of nilpotency class~$3$ whose bivariate representation zeta function does not specialize to its twist representation zeta function. 

We conclude this section with an example which illustrates Theorem~\ref{thmA} and specializations~\eqref{specialization} and~\eqref{specializationirr}. 
\begin{exm}\label{exintro} Let~$\mathbf{H}(\ri)$ denote the Heisenberg group of upper uni-triangular $3\times 3$-matrices over~$\ri$. 
In Example~\ref{Heis}, we show that, for a given nonzero prime ideal~$\p$ of~$\ri$ with $|\ri:\p|=q$, the bivariate zeta functions of~$\mathbf{H}(\lri)$ are given by 
\begin{align}
\label{Heisirr}\rlvlzf_{\mathbf{H}(\lri)}(s_1,s_2)		&=\frac{1-q^{-s_1-s_2}}{(1-q^{1-s_1-s_2})(1-q^{2-s_2})} ~\text{ and}&\\
\label{Heiscc}\clvlzf_{\mathbf{H}(\lri)}(s_1,s_2)		&=\frac{1-q^{-s_1-s_2}}{(1-q^{1-s_2})(1-q^{2-s_1-s_2})}.			&	
\end{align}
In particular, these are rational functions in~$q$, $q^{-s_1}$, and~$q^{-s_2}$, and
\[\astlvlzf_{\mathbf{H}(\lri)}(s_1,s_2)\mid_{q\to q^{-1}}=-q^{3-s_2}\astlvlzf_{\mathbf{H}(\lri)}(s_1,s_2),\]
for each $\ast \in \{ \textup{irr},\textup{cc}\}$.  
Specializations~\eqref{specialization} and~\eqref{specializationirr} yield
\[\kzeta_{\mathbf{H}(\lri)}(s)=\frac{1-q^{-s}}{(1-q^{1-s})(1-q^{2-s})}\hspace{0.3cm}\text{ and }\hspace{0.3cm} \rtzeta_{\mathbf{H}(\lri)}(s)=\frac{1-q^{-s}}{1-q^{1-s}}.\]
The expression of the class number zeta function agrees with the formula given in~\cite[Section~9.3, Table~1]{Ro17}. This example also occurs in~\cite[Section~8.2]{BDOP}, corrected by a sign mistake.
The expression of the twist representation zeta function accords with~\cite[Theorem~B]{StVo14}.
We further note that
\[\kzeta_{\mathbf{H}(\lri)}(s)\mid_{q \to q^{-1}}=-q^{3-s}\kzeta_{\mathbf{H}(\lri)}(s).\qedhere\] 
\end{exm}
%
\section{General properties}
\subsection{Convergence}\label{conv}
It is well known that, if a complex sequence $(a_n)_{n\in\N}$ grows at most polynomially, the Dirichlet series $D((a_n)_{n\in\N},s):=\sum_{n=1}^{\infty}a_nn^{-s}$ converges for 
$s \in \C$ with sufficiently 
large real part. We now show that an analogous result holds for double Dirichlet series.
For simplicity, we write $(a_{n,m}):=(a_{n,m})_{n,m\in\N}$.

\begin{dfn} A double sequence $(a_{n,m})$ of complex numbers is said to have \emph{polynomial growth} if there exist positive integers 
$\alpha_1$ and $\alpha_2$ and a constant $C>0$ such that 
$|a_{n,m}|<Cn^{\alpha_1}m^{\alpha_2}$ for all $n,m \in \N$.
\end{dfn}
\begin{pps}\label{double}
 If the double sequence $(a_{n,m})$ has polynomial growth, then there exist $\alpha_1$, $\alpha_2\in \R$ such that the double Dirichlet series
 \[D((a_{n,m}), s_1, s_2):=\sum_{n=1}^{\infty}\sum_{m=1}^{\infty}a_{n,m}n^{-s_1}m^{-s_2}\]
 converges absolutely for $(s_1,s_2)\in \C^2$ satisfying $\re(s_1)>\alpha_1$ and $\re(s_2)>\alpha_2$. 
\end{pps}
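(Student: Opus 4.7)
The plan is to reduce the question to the classical convergence criterion for ordinary Dirichlet series by exploiting the product structure $n^{-s_1}m^{-s_2}$ and the polynomial bound on $|a_{n,m}|$. Since the inequality $|a_{n,m}|<Cn^{\alpha_1}m^{\alpha_2}$ is given with fixed exponents, I would take exactly these $\alpha_1,\alpha_2$ as the exponents in the conclusion, so no extra choice is needed.

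First I would fix $(s_1,s_2)\in\C^2$ with $\sigma_i:=\re(s_i)>1+\alpha_i$ for $i=1,2$, and estimate a single term of the series:
\[
|a_{n,m}\,n^{-s_1}m^{-s_2}| \;=\; |a_{n,m}|\,n^{-\sigma_1}m^{-\sigma_2} \;<\; C\,n^{\alpha_1-\sigma_1}\,m^{\alpha_2-\sigma_2}.
\]
By hypothesis $\sigma_i-\alpha_i>1$, so the real Dirichlet series $\sum_{n\geq 1}n^{\alpha_1-\sigma_1}$ and $\sum_{m\geq 1}m^{\alpha_2-\sigma_2}$ both converge (they are essentially values of the Riemann zeta function at real arguments greater than $1$).

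Next I would argue that convergence of the double series follows from this majorisation. One way is to invoke Tonelli's theorem for the counting measure: the partial sums of $|a_{n,m}n^{-s_1}m^{-s_2}|$ are bounded above by
\[
C\Bigl(\sum_{n=1}^{\infty}n^{\alpha_1-\sigma_1}\Bigr)\Bigl(\sum_{m=1}^{\infty}m^{\alpha_2-\sigma_2}\Bigr)<\infty,
\]
so the double series converges absolutely, and both iterated sums agree with it. Alternatively, one may simply observe that any enumeration of $\N\times\N$ produces a series of nonnegative terms whose partial sums are dominated by this product, so the supremum is finite and absolute convergence holds.

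There is no genuine obstacle here; the result is a straightforward transfer of the standard Dirichlet convergence argument to two variables, and the only thing to be a little careful about is justifying absolute convergence (rather than merely conditional convergence of an iterated sum) before rearranging, which is why Tonelli on the nonnegative majorant is the cleanest route.
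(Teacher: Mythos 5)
Your proof is correct and follows essentially the same route as the paper's: both majorize $|a_{n,m}n^{-s_1}m^{-s_2}|$ by $C\,n^{\alpha_1-\re(s_1)}m^{\alpha_2-\re(s_2)}$ using the polynomial-growth hypothesis with the same exponents $\alpha_1,\alpha_2$, then conclude from convergence of the resulting double series of nonnegative terms. The only difference is cosmetic: the paper cites a textbook fact (the double harmonic series $\sum_k\sum_l k^{-p}l^{-q}$ converges iff $p>1$ and $q>1$), whereas you justify the final step directly by observing that the partial sums factor and are bounded by the product $\bigl(\sum_n n^{\alpha_1-\re(s_1)}\bigr)\bigl(\sum_m m^{\alpha_2-\re(s_2)}\bigr)$ and invoking Tonelli, which is a clean, self-contained way to close the argument.
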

\begin{proof}
Let $\beta_1$, $\beta_2 \in \N$ and $C>0$ be such that $|a_{n,m}|<Cn^{\beta_1}m^{\beta_2}$, for all $n,m\in \N$. Then
\[\sum_{n=1}^{\infty}\sum_{m=1}^{\infty} \left|\frac{a_{n,m}}{n^{s_1}m^{s_2}}\right|\leq C\sum_{n=1}^{\infty}\sum_{m=1}^{\infty} \frac{1}{n^{\re(s_1)-\beta_1}m^{\re(s_2)-\beta_2}}.\]
The relevant statement of Proposition~\ref{double} then follows from the fact that, for $p,q \in \R$, the harmonic double series 
\[\sum_{k=1}^{\infty}\sum_{l=1}^{\infty}\frac{1}{k^{p}l^{q}}\] 
converges if and only if $p>1$ and $q>1$; see \cite[Example~7.10(iii)]{GhLi}.
\end{proof}

For a unipotent $\ri$-group scheme~$\G$ and positive integers~$m$ and $n$, write
\[r_{n,m}(\G(\ri))=\sum_{\substack{I \unlhd \ri\\ |\ri:I|=m}}r_n(\G(\ri/I)) 
\text{ and }c_{n,m}(\G(\ri))=\sum_{\substack{I \unlhd \ri\\ |\ri:I|=m}}c_n(\G(\ri/I)).\]
The bivariate representation and the bivariate conjugacy class zeta functions of $\G(\ri)$ are given by the following double Dirichlet series with nonnegative coefficients:
\begin{align*}
 \rlvlzf_{\G(\ri)}(s_1,s_2)=\sum_{n=1}^{\infty}\sum_{m=1}^{\infty}r_{n,m}(\G(\ri))n^{-s_1}m^{-s_2},\\
 \clvlzf_{\G(\ri)}(s_1,s_2)=\sum_{n=1}^{\infty}\sum_{m=1}^{\infty}c_{n,m}(\G(\ri))n^{-s_1}m^{-s_2}.
\end{align*}

\begin{pps}
The bivariate zeta functions $\rlvlzf_{\G(\ri)}(s_1,s_2)$ and $\clvlzf_{\G(\ri)}(s_1,s_2)$ converge (at least) on some open domain of the form 
\begin{equation}\label{DAB}\mathscr{D}_{\alpha_1,\alpha_2}:=\{(s_1,s_2)\in \C^2\mid \re(s_1)>\alpha_1,~\re(s_2)> \alpha_2\},
\end{equation}
for some real constants $\alpha_1$ and $\alpha_2$.
\end{pps}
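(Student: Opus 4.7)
The plan is to apply Proposition~\ref{double}, for which it suffices to verify polynomial growth of the coefficient double sequences $(r_{n,m}(\G(\ri)))_{n,m\in\N}$ and $(c_{n,m}(\G(\ri)))_{n,m\in\N}$ displayed right before the proposition. Since those coefficients are already written as sums over ideals of fixed index, establishing polynomial growth reduces to two uniform estimates: one controlling the size of $\G(\ri/I)$, and one controlling the number of ideals of given index in $\ri$.

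For the first estimate, I would use that, for any ideal $I \unlhd \ri$ of finite index $m=|\ri:I|$, the unipotent quotient $\G(\ri/I)$ is a finite group of order $m^{h}$, where $h$ denotes the dimension of $\G$ over $\ri$ (as $\G$ is isomorphic as a scheme to affine $h$-space over $\ri$). This yields the trivial uniform bounds
\[r_n(\G(\ri/I)) \leq \class(\G(\ri/I)) \leq |\G(\ri/I)| = m^{h}, \qquad c_n(\G(\ri/I)) \leq m^{h},\]
valid for every $n,m\in\N$. For the second estimate, the number $a_m := |\{I \unlhd \ri \mid |\ri:I|=m\}|$ of ideals of $\ri$ of index $m$ grows polynomially in $m$: the generating Dirichlet series $\sum_{m\geq 1} a_m m^{-s}$ is the Dedekind zeta function $\ddkzeta(s)$, which converges absolutely for $\re(s)>1$, and unique factorisation of ideals in $\ri$ also provides an elementary bound of the form $a_m \leq d(m)^{[K:\Q]}$, with $d$ the divisor-counting function. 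Fix $C'>0$ and $\beta\in\N$ with $a_m \leq C' m^{\beta}$ for all $m\in\N$.

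Summing over ideals of index $m$ then gives
\[r_{n,m}(\G(\ri)) \;\leq\; a_m \cdot m^{h} \;\leq\; C'\,n\,m^{\beta+h},\]
and the same bound for $c_{n,m}(\G(\ri))$. This exhibits polynomial growth in the sense of Proposition~\ref{double}, with, for instance, $\alpha_1=1$ and $\alpha_2=\beta+h$, and that proposition then yields absolute convergence of both bivariate zeta functions on an open domain of the required form. I do not anticipate a real obstacle: the only nontrivial ingredient is the classical polynomial bound on $a_m$, and everything else is a bookkeeping consequence of the definitions and of the standard trivial estimates $r_n(G), c_n(G)\leq |G|$.
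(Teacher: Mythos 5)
Your proof is correct and takes essentially the same route as the paper: both verify polynomial growth of the coefficient double sequences and invoke Proposition~\ref{double}, combining the trivial bounds $r_n(G),c_n(G)\leq|G|$ with a polynomial bound on $|\G(\ri/I)|$ in terms of $m=|\ri:I|$ and a polynomial bound on the number of ideals of index $m$ coming from the Dedekind zeta function. The only cosmetic difference is that you make these bounds explicit ($|\G(\ri/I)|=m^{h}$ and $a_m\leq d(m)^{[K:\Q]}$), whereas the paper merely asserts the existence of suitable polynomials $\mathscr{P}$ and $\mathscr{Q}$.
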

\begin{proof}
Set $\gamma_m(\ri):=|\{I  \unlhd \ri \mid |\ri:I|=m\}|$.
The Dedekind zeta function of the number field $K$ is given by $\ddkzeta(s)=\sum_{m=1}^{\infty}\gamma_m(\ri)m^{-s}$,
and is known to converge for $\re(s)>1$. In particular, there exists a positive real constant~$C$ such that, for each $M \in \N$, it holds that $\sum_{m=1}^{M}\gamma_m(\ri)<CM$.

Given $I \unlhd \ri$, the finite group $\G(\ri/I)$ is a congruence quotient of a torsion-free nilpotent and finitely generated group. 
Then there exists $\mathscr{P}(X)\in \Z[X]$ such that, for all $I \unlhd \ri$, the cardinality of $\G(\ri/I)$ is bounded by $\mathscr{P}(m)$, where $m=|\ri:I|$. 

Given $I \unlhd \ri$, the finite group $\G(\ri/I)$ has at most $|\G(\ri/I)|$ conjugacy classes.
Consequently, for each $(n,m)\in \N^2$, 
\[c_{n,m}(\G(\ri))=\sum_{\substack{ I \unlhd \ri \\ |\ri:I|=m}}c_n(\G(\ri/I)) < C m\mathscr{P}(m).\]
Analogously, $r_{n,m}(\G(\ri)) < C m\mathscr{P}(m)$, since $r_n(\G(\ri/I)) \leq |\G(\ri/I)|$.
\end{proof}

When finite, the abscissa of convergence of a Dirichlet series $\sum_{n=1}^{\infty}a_nn^{-s}$ gives the precise degree of polynomial growth of the sequence $(\sum_{i=1}^{n}a_i)_{n\in \N}$. 
However, for double Dirichlet series $\sum_{m=1}^{\infty}\sum_{n=1}^{\infty}a_{n,m}n^{-s_1}m^{-s_2}$, an analogue of an abscissa of convergence might not be unique. 
As mentioned in Example~\ref{exintro}, the bivariate representation zeta function of the Heisenberg group $\mathbf{H}(\ri)$ is given by
\begin{align*} \rlvlzf_{\mathbf{H}(\lri)}(s_1,s_2)	 =\frac{1-q^{-s_1-s_2}}{(1-q^{1-s_1-s_2})(1-q^{2-s_2})}.\end{align*} 
The maximal domain of convergence of $\rlvlzf_{\mathbf{H}(\lri)}(s_1,s_2)$ is
\[\mathscr{D}_{\mathbf{H}}:=\{ (s_1,s_2)\in \C^2  \mid \re(s_1+s_2)> 2 \text{ and }\re(s_2)>3\}.\]
In contrast with the one variable case, there is more than one choice of constants $(\alpha_1,\alpha_2) \in \R^2$ such that~$\mathscr{D}_{\alpha_1,\alpha_2}$ is a maximal domain of the form~\eqref{DAB} with the property that $\rlvlzf_{\mathbf{H}(\ri)}(s_1,s_2)$ converges on it. 
For instance, $\mathscr{D}_{-1,3}$ and $\mathscr{D}_{-2,4}$ are two such domains. 
However, no choice of $(\alpha_1,\alpha_2)$ is such that~$\mathscr{D}_{\alpha_1,\alpha_2}$ coincides with the maximal domain of convergence~$\mathscr{D}_{\mathbf{H}}$ of $\rlvlzf_{\mathbf{H}(\lri)}(s_1,s_2)$. 

\subsection{Euler products}
Our main results concern properties of local factors of bivariate representation and bivariate conjugacy class zeta functions.
In this section, we show that the corresponding global zeta functions can be written as products of such local terms, allowing us to relate local results to the global zeta 
functions. Here, $\G$~denotes a unipotent $\ri$-group scheme. 
\begin{pps}\label{factor} 
For each $\ast \in \{\textup{irr},\textup{cc}\}$ and for $s_1$ and $s_2$ with sufficiently large real parts, the following Euler decomposition holds.
\[ \astlvlzf_{\G(\ri)}(s_1,s_2)=
\prod_{\p \in \specp} \astlvlzf_{\G(\ric)}(s_1,s_2).\]
\end{pps}
\begin{proof} 
It suffices to show that, for any ideal $I$ of finite index in $\ri$ with prime decomposition $I=\p_{1}^{e_1}\cdots \p_{r}^{e_r}$, with $\p_i \neq \p_j$ if $i \neq j$, 
the following holds
\[\astzeta_{\G(\ri/I)}(s)=\prod_{i=1}^{r}\astzeta_{\G(\ri/\p_{i}^{e_i})}(s).\]
Unipotent groups satisfy the strong approximation property; see~\cite[Lemma~5.5]{PlRa94}. This gives an isomorphism 
\begin{equation}\label{sap}\G(\ri/I) \cong \G(\ri/\p_{1}^{e_1})\times \dots \times \G(\ri/\p_{r}^{e_r}).\end{equation}
We first show the relevant statement of Proposition~\ref{factor} for the representation case. 
Given a positive integer~$n$, write $[n]=\{1,\dots, n\}$.
For a group~$G$, denote by $\Irr(G)$ the set of its irreducible characters. A consequence of~\eqref{sap} is  
\[\Irr(\G(\ri/I)) \cong \Irr(\G(\ri/\p_{1}^{e_1}))\times \dots \times \Irr(\G(\ri/\p_{r}^{e_r})).\]
Write $\Irr_i=\Irr(\G(\ri/\p_{i}^{e_i}))$. 
Since $r_n(\G(\ri/I))=|\{\chi \in \Irr(\G(\ri/I)): \chi(1)=n\}|$, it follows that
\begin{align*}
\rzeta_{\G(\ri/I)}(s)=\sum_{\chi \in \Irr(\G(\ri/I))}\chi(1)^{-s} 	&=\sum_{(\chi_1, \dots, \chi_r) \in \Irr_1\times \dots \times \Irr_{r}} \chi_1(1)^{-s}\cdots\chi_r(1)^{-s}\\ 
=\prod_{i=1}^{r}\sum_{\chi_i \in \Irr_i}\chi_i(1)^{-s} \hspace{0.35cm}
			&=\prod_{i=1}^{r}\rzeta_{\G(\ri/\p_{i}^{e_i})}(s).
\end{align*}

For the conjugacy class zeta function, we use the fact that each conjugacy class~$C$ of $\G(\ri/\p_{1}^{e_1})\times \dots \times \G(\ri/\p_{r}^{e_r})$ is of the form 
$C=C_1 \times \dots \times C_r$, where~$C_i$ is a conjugacy class of $\G(\ri/\p_{i}^{e_i})$, for each $i\in [r]$. Thus
\[\cc_n(\G(\ri/I))=\sum_{\substack{n_1,\dots,n_r\in \N_0\\n_1 \cdots n_r=n}}\cc_{n_1}(\G(\ri/\p_{1}^{e_1}))\cdots\cc_{n_r}(\G(\ri/\p_{r}^{e_r})).\]
Set $q_i=|\ri:\p_i|$. In Section~\ref{chcc}, we show that all conjugacy classes of $\G(\ri/\p_{i}^{e_i})$ have size a power of~$q_i$. Consequently 
\begin{align*}
 \cczeta_{\G(\ri/I)}(s)	&=\sum_{n=1}^{\infty}\sum_{\substack{n_1,\dots,n_r\in \N_0\\q_{1}^{n_1} \dots q_{r}^{n_r}=n}}\cc_{q_{1}^{n_1}}(\G(\ri/\p_{1}^{e_1})) \cdots 
			      \cc_{q_{r}^{n_r}}(\G(\ri/\p_{r}^{e_r}))(q_{1}^{n_1}\dots q_{r}^{n_r})^{-s}&\\ 
			&=\prod_{k=1}^{r}\left(\sum_{n_k=0}^{\infty}\cc_{q_{k}^{n_k}}(\G(\ri/\p_{k}^{e_k}))q_{k}^{-n_ks}\right)=
			      \prod_{k=1}^{r}\cczeta_{\G(\ri/\p_{k}^{e_k})}(s).\qedhere
 \end{align*}		
 \end{proof}

\section{Group schemes obtained from nilpotent Lie lattices}\label{groups}
Here, we explain briefly how the unipotent groups schemes $\G$ of interest in this work are constructed; for more details see~\cite[Section~2.1.2]{StVo14}.

An $\ri$-Lie lattice is a free and finitely generated $\ri$-module~$\Lambda$ together with an antisymmetric $\ri$-bilinear form $[\cdot,\cdot]$ which satisfies the Jacobi identity. 

Let $(\Lambda, [\cdot,\cdot])$ be a nilpotent $\ri$-Lie lattice of $\ri$-rank~$h$. Let $\mathscr{B}=(x_1, \dots, x_h)$ be an $\ri$-basis for~$\Lambda$. 
For each $\ri$-algebra $R$, set $\Lambda(R)=\Lambda \otimes_{\ri} R$. Then 
\[\mathscr{B}_R=(x_1 \otimes 1_R, \dots, x_h\otimes 1_R)\] is an $R$-basis of~$\Lambda(R)$. 

If~$\Lambda$ has nilpotency class~$c$ and satisfies $\Lambda' \subseteq c!\Lambda$, where $\Lambda'=[\Lambda,\Lambda]$ is its derived Lie sublattice, we may define a group operation~$\ast$ on~$\Lambda(\ri)$ by means of the Hausdorff series. 
The assumption $\Lambda' \subseteq c!\Lambda$ assures that all denominators in the Hausdorff series will cancel out, so that the coordinates of the operation~$\ast$ in terms of~$\mathscr{B}$ are given by polynomials $f_1$, $\dots$, $f_h: \ri^{2h} \to \ri$, say, with coefficients in~$\ri$. That is, given $a=\sum_{i=1}^{h}a_ix_i$, $b=\sum_{i=1}^{h}b_ix_i \in \Lambda(\ri)$, we have
\[a \ast b =\sum_{i=1}^{h}f_i(\underline{a},\underline{b})x_i,\] 
where $\underline{a}=(a_1, \dots, a_h)$, $\underline{b}=(b_1, \dots, b_h) \in \ri^h$.

For each $\ri$-algebra~$R$, one may give $\Lambda(R)$ a group structure by defining the group operation~$\ast$ on $\Lambda(R)$ as follows.
\[\text{ for all } a,b \in \Lambda(R): a \ast b = \sum_{i=1}^{f}f_i(\underline{a},\underline{b}) (x_i\otimes 1_R),\]
where \[a=\sum_{i=1}^{h}a_i(x_i\otimes 1_R),~ b=\sum_{i=1}^{h}b_i(x_i\otimes 1_R) \in \Lambda(R),\] 
and $\underline{a}=(a_1, \dots, a_h)$, $\underline{b}=(b_1, \dots, b_h) \in \ri^h$.
This defines a unipotent $\ri$-group scheme $\G=\G_{\Lambda}$ which is isomorphic as a scheme to affine $h$-space over~$\ri$ which represents the group functor $R \mapsto (\Lambda(R),\ast)$.

The $\ri$-group scheme~$\G$ is such that $\G(\ri)$ is a $\mathcal{T}$-group of nilpotency class~$c$. 
If~$R$ is an $\ri$-algebra whose underlying additive group is a finitely generated pro-$p$ group, for instance $R=\ric$, then $\G(R)$ is a finitely generated pro-$p$ group of nilpotency class~$c$. 

For Lie lattices~$\Lambda$ of nilpotency class~$2$, a different construction of such unipotent group schemes is given in~\cite[Section~2.4.1]{StVo14}, in which case the hypothesis $\Lambda' \subseteq 2\Lambda$ is not needed. However, if this condition is satisfied, the unipotent group schemes obtained via such construction coincide with the latter ones.

\begin{rmk}
Definition~\ref{biva} of bivariate zeta functions of groups~$\G(\ri)$ might be extended to $\mathcal{T}$-groups which are not necessarily of the form~$\G(\ri)$. 

As explained in~\cite[Section~5]{DuVo14} every $\mathcal{T}$-group~$G$ is virtually of the form~$\G(\Z)$ for some unipotent $\Z$-group scheme~$\G$ obtained from a nilpotent $\Z$-Lie lattice. We might then define $\astlvlzf_G(s_1,s_2)$ as the corresponding bivariate zeta function of a finite index subgroup~$H$ of~$G$ which is of the form~$\G(\Z)$.
%
%
That is,
\[\astlvlzf_{G}(s_1,s_2)=\astlvlzf_{G,H}(s_1,s_2):=\astlvlzf_{\G(\Z)}(s_1,s_2),~\ast\in\{\irrup, \ccup\}.\] 

If~$G$ has two subgroups $H_1=\G_1(\Z)$ and $H_2=\G_2(\Z)$ of finite index, then $H_1$ and $H_2$ are commensurable and, therefore, they have the same pro-$p$ completion for all but finitely many prime integers~$p$; see~\cite[Lemma~1.8]{Pi71}. 
In particular, $\astlvlzf_{\G_1(\Z_p)}(s_1,s_2)=\astlvlzf_{\G_2(\Z_p)}(s_1,s_2)$, for all but finitely many primes~$p$, that is, although $\astlvlzf_{G,H_1}(s_1,s_2)$ and $\astlvlzf_{G,H_2}(s_1,s_2)$ may not coincide, they are almost the same in the sense that they coincide except for finitely many local factors.

In this point of view, bivariate zeta functions also yield invariants for $\mathcal{T}$-groups, namely domains of convergence and of meromorphy.
More precisely, we prove in~\cite[Theorem~1]{PL18III} that the maximal domains of convergence of the bivariate zeta functions of groups of the form $\G(\ri)$---when finitely many local factors are disregarded---are independent of the ring of integers~$\ri$ and admit meromorphic continuations to domains which are also independent of~$\ri$. 
\end{rmk}

From now on, we assume that~$\G$ is a unipotent $\ri$-group scheme obtained from a nilpotent $\ri$-Lie lattice~$\Lambda$.

\section{Bivariate zeta functions and \texorpdfstring{$\p$}{p}-adic integrals}\label{padicintegral}
Our results rely on the fact that local bivariate representation and local bivariate conjugacy class zeta functions of groups associated to unipotent group schemes can be written in terms of $\p$-adic integrals. 

In Section~\ref{spadic}, we show how to write most of the local factors of these zeta functions in terms of $\p$-adic integrals using the methods of~\cite[Section~2.2]{Vo10}. A consequence is that these local factors are given by  rational functions as stated in Theorem~\ref{thmA}.
In Section~\ref{pthmA}, we use the results of~\cite[Section~2.1]{Vo10} and~\cite[Section~4.1]{AKOV13} to show that these local factors satisfy functional equations and that they are uniform under base extension, concluding the proof of Theorem~\ref{thmA}.

The methods of~\cite[Section~2]{Vo10} were also applied in~\cite{Ro17} to show functional equations for the ask zeta functions of modules of matrices over compact discrete valuation rings~$\Lri$ in characteristic zero. This result provides functional equations for the class number zeta functions of certain nilpotent groups $G \leq \Gl_d(\Lri)$.

Furthermore, these methods were applied to show results similar to Theorem~1.4 in~\cite{AKOV13}, for the representation zeta functions of certain $p$-adic analytic groups, and in~\cite{StVo14}, for the local factors of twist representation zeta functions of groups of the form~$\G(\ri)$. 

We now recall the methods of~\cite[Section~2.2]{Vo10}. 

For the rest of this section, let $\p$~be a fixed nonzero prime ideal of~$\ri$ and $\lri=\ric$. 
Denote by~$q$ the cardinality of~$\ri/\p$ and by~$p$ its characteristic.

Recall that given an element $z \in \lri$, the ideal $(z) \lhd \ri$ has prime factorisation $(z)=\p^{e}\p_{1}^{e_1}\cdots\p_{r}^{e_r}$ such that $\p_i\neq \p$, 
for all $i\in[r]$. The $\p$-\emph{adic valuation} of $z$ is $v_{\p}(z)=e$, and its $\p$-\emph{adic norm} is $|z|_{\p}=q^{-v_{\p}(z)}$. Equivalently,  $v_{\p}(z)=e$ and $|z|_{\p}=q^{-e}$ if $z \in \p^e\setminus \p^{e+1}$.

For each $j \in \N$, denote by $\|\cdot\|_{\p}$ the maximum norm of $\lri^j$ with respect to $|\cdot|_{\p}$; that is, for $\mathbf{z}=(z_1, \dots , z_j)\in \lri^j$, let $\|\mathbf{z}\|_{\p}=\max\{|z_k|_{\p}\}_{k=1}^{j}$. 

For $N \in \N$, we also denote by $v_{\p}$ the function on $\lri/\p^N$ given as follows: let~$\overline{z}$ be the image of $z \in \lri$ under $\lri \to \lri/\p^N$ and assume $z \in \p^e \setminus \p^{e+1}$. Then $v_{\p}(\overline{z})=e$ if $0 \leq e <N$, and $v_{\p}(\overline{z})=+\infty$, otherwise.

We make the following distinction: $\p^m$ denotes the $m$th ideal power $\p\cdots\p$, whilst $\p^{(m)}$ denotes the $m$-fold Cartesian power $\p \times \dots \times \p$.

For $k$, $ N\in \N$, set
\begin{align*}W_{k}(\lrip)&:=((\lrip)^k)^{*}=\{\mathbf{x}\in(\lrip)^k\mid v_{\p}(\mathbf{x})=0\},\\
\W_{k}&:=(\lri^k)^{*}\phantom{(/p^N)}=\{\mathbf{x}\in\lri^k\mid v_{\p}(\mathbf{x})=0\},
\end{align*}
and let $W_{k}((0))=(0)^k$ for each $k \in \N$. 

Let $\pi\in \lri$ be a uniformizer of $\lri$. Given a matrix $M\in \Mat_{m \times n}(\lri/\p^N)$, we write $\nu(M)=(m_1, \dots, m_{\epsilon})$ to indicate the elementary divisor type of~$M$, where $0 \leq \epsilon \leq \min\{m,n\}$. 

In the following, denote by~$\text{Frac}(\lri)$ the field of fractions of~$\lri$.
Let $n \in \N$ and let $\mathcal{R}(\underline{Y})=\mathcal{R}(Y_1, \dots, Y_n)$ be a matrix of polynomials $\mathcal{R}(\underline{Y})_{ij}\in\lri[\underline{Y}]$ with
\[u_\mathcal{R}=\max\{\rk_{\text{Frac}(\lri)}\mathcal{R}(\mathbf{z}) \mid \mathbf{z}\in \lri^n\}.\] For each $\mathbf{m}\in \N_{0}^{u_\mathcal{R}}$, write
 \begin{align*}
\mathfrak{N}_{\m}^{\mathcal{R}}(\lrip)&:=\{\mathbf{y} \in W_{n}(\lrip) \mid \vry= \mathbf{m}\}\text{ and }\\
\Nr_{\m}(\lrip)&:=|\mathfrak{N}_{\m}^{\mathcal{R}}(\lrip)|.\nonumber 
 \end{align*} 
The number $\Nr_{\m}(\lrip)$ is zero unless $\mathbf{m}=(m_1, \dots, m_{\ur})$ satisfies \[0 = m_1 \leq \dots \leq m_{\ur} \leq N.\] 

Let  $\underline{r}=(r_1, \dots, r_{\ur})$ be a vector of variables. Consider the Poincaré series
\begin{equation}\label{poincare}\Poin_{\mathcal{R}}(\underline{r},t)=\sum_{\substack{N \in \N\\ \m \in \N_{0}^{\ur}}}
\Nr_{\m}(\lrip)q^{-tN-\sum\limits_{i=1}^{\ur}r_im_i}.
\end{equation}

In~\cite[Section 2.2]{Vo10} it is shown how to describe the series~\eqref{poincare} in terms of the $\p$-adic integral
\begin{equation}\label{Vo10}\pint_{\mathcal{R}}(\underline{r},t)=\frac{1}{1-q^{-1}}\int_{(x,\underline{y}) \in \p\times \W_{n}}|x|_{\p}^{t}\prod_{k=1}^{\ur}
\frac{\|F_{k}(\mathcal{R}(\underline{y})) \cup xF_{k-1}(\mathcal{R}(\underline{y}))\|_{\p}^{r_k}}{\|F_{k-1}(\mathcal{R}(\underline{y}))\|_{\p}^{r_{k}}}d\mu,
\end{equation}
where $\mu$ is the additive Haar measure normalised so that $\mu(\lri^{n+1})=1$; 
$F_j(\mathcal{R}(\underline{y}))$ is the set of nonzero $j \times j$-minors of $\mathcal{R}(\underline{y})$.

More precisely, in~\cite[Section 2.2]{Vo10} it is shown that~\eqref{poincare} satisfies
\begin{equation}\label{Vo14gen}
\Poin_{\mathcal{R}}(\underline{r},t)=\pint_{\mathcal{R}}(\underline{r},t-n-1). 
\end{equation}

Suppose now that $M\in \Mat_{n\times n}(\lri/\p^N)$ is an antisymmetric matrix. Then, for some $\xi \in [n]_0:=\{0,1,\dots,n\}$, the elementary divisor type of~$M$ is of the form
\[\nu(M)=(m_1, m_1, m_2, m_2, \dots, m_{\xi},m_{\xi}).\]
If $M$ is antisymmetric, we write $\tilde{\nu}(M)=(m_1, m_2, \dots, m_{\xi})$ for its reduced elementary divisor type, that is, to indicate~$\nu(M)=(m_1, m_1, m_2, m_2, \dots, m_{\xi},m_{\xi})$.

Assume now that $\mathcal{R}(\underline{Y})$ is antisymmetric, in which case $\ur$ is even. For each $\m \in \N_{0}^{\ur/2}$, write 
\begin{align*}\widetilde{\mathfrak{N}}_{\m}^{\mathcal{R}}(\lrip)&=\{\mathbf{y} \in W_{n}(\lrip) \mid \tilde{\nu}(\mathcal{R}(\textbf{y}))= \mathbf{m}\} \text{ and } \\
\Nr_{\m}(\lrip)&=|\widetilde{\mathfrak{N}}_{\m}^{\mathcal{R}}(\lrip)|.
\end{align*}

For $\mathcal{R}(\underline{Y})$ antisymmetric, we assume that the vector of variables $\underline{r}$ is of the form $\underline{r}=\left(r_1,r_1,\dots,r_{\ur/2},r_{\ur/2}\right)$ so that \vspace{-0.3cm}
\begin{equation}\label{poincareas}\Poin_{\mathcal{R}}(\underline{r},t)=\sum_{N \in \N,~\m \in \N_{0}^{\ur/2}}
\Nr_{\m}(\lrip)q^{-tN-2\sum\limits_{i=1}^{\ur/2}r_im_i}.
\end{equation}
Recall the notation $[n]=\{1,\dots, n\}$, for $n \in \N$. Given $x\in \lri$ with $v_{\p}(x)=N$, $\mathbf{y}\in\lri^{n}$, and $k\in[\ur]$, we obtain from \cite[Lemma~4.6(i) and~(ii)]{Ro17} the following for the antisymmetric 
matrix $\mathcal{R}(\mathbf{y})$ with $\tilde{\nu}(\mathcal{R}(\mathbf{y}))=(m_1, \dots, m_{\ur})$:
\[\frac{\|F_{2k}(\mathcal{R}(\mathbf{y})) \cup x F_{2k-1}(\mathcal{R}(\mathbf{y}))\|_{\p}}{\|F_{2k-1}(\mathcal{R}(\mathbf{y}))\|_{\p}}=
\frac{\|F_{2k-1}(\mathcal{R}(\mathbf{y})) \cup x F_{2(k-1)}(\mathcal{R}(\mathbf{y}))\|_{\p}}{\|F_{2(k-1)}(\mathcal{R}(\mathbf{y}))\|_{\p}}=q^{-\min(m_k,N)},\] 
and 
\[\frac{\|F_{2k}(\mathcal{R}(\mathbf{y})) \cup x^2 F_{2(k-1)}(\mathcal{R}(\mathbf{y}))\|_{\p}}{\|F_{2(k-1)}(\mathcal{R}(\mathbf{y}))\|_{\p}}=q^{-2\min(m_k,N)}.\]

Therefore, if $\mathcal{R}(\underline{Y})$ is an antisymmetric matrix, the series~\eqref{poincareas} can be described by the $\p$-adic integral
\begin{align}&\Poin_{\mathcal{R}}(\underline{r},t)=
\pint_{\mathcal{R}}(\underline{r},t-n-1)=\nonumber\\ 
&\label{Vo10as}\frac{1}{1-q^{-1}}\int_{(x,\underline{y}) \in \p\times \W_{n}}|x|_{\p}^{t-n-1}
\prod_{k=1}^{\ur/2}\frac{\|F_{2k}(\mathcal{R}(\underline{y})) \cup x^2F_{2(k-1)}(\mathcal{R}(\underline{y}))\|_{\p}^{r_k}}
{\|F_{2(k-1)}(\mathcal{R}(\underline{y}))\|_{\p}^{r_{k}}}d\mu.
\end{align}

\subsection{The numbers \texorpdfstring{$r_n(\G_N)$}{rn(GN)} and \texorpdfstring{$\cc_n(\G_N)$}{cn(GN)}}\label{chcc}
Recall the notation $\G_N=\G(\lrip)$. We now write the local bivariate zeta functions at $\p$ in terms of sums encoding the elementary divisor types of certain matrices 
associated to $\Lambda$. 
This is done by rewriting the numbers $r_n(\G_N)$ and $c_n(\G_N)$, for $n\in \N$ and $N \in \N_0$, in terms of numbers~$\Nr_{\m}(\lrip)$ defined at the beginning of Section~\ref{padicintegral}. In each case, $\mathcal{R}$ is one of the two commutator matrices of 
$\Lambda$ which we now define.

Set $\g=\Lambda(\lri)=\Lambda\otimes_{\ri} \lri$. Let $\g'$ be the derived Lie sublattice of $\g$, and let $\z$ be its center. Consider the torsion-free $\ri$-ranks  
\begin{align*} & h=\rk(\g), & & a=\rk(\g/\z), & & b=\rk(\g'), & & r= \rk(\g/\g'), & & z=\rk(\z) .\end{align*} 
For~$R$ either~$\ri$ or~$\lri$, let~$M$ be a finitely generated $R$-module with a submodule~$N$. The \emph{isolator}~$\iota(N)$ of~$N$ in~$M$ is the smallest submodule~$L$ of~$M$ containing~$N$ such that~$M/L$ is torsion free. In particular $\z=\iota(\z)$; see~\cite[Lemma~2.5]{StVo14}. 
Set $k=\rk(\iota(\g')/\iota(\g'\cap\z))=\rk(\iota(\g'+\z)/\z)$.

The commutator matrices are defined with respect to a fixed $\lri$-basis 
\[\mathscr{B}=(e_1, \dots, e_h)\] of the $\lri$-Lie lattice $\g$, satisfying the conditions
\begin{align*}
 (e_{a-k+1}, \dots, e_{a}) 	& \text{ is an $\lri$-basis for }\iota(\g'+\z),\\
 (e_{a+1}, \dots, e_{a-k+b})	& \text{ is an $\lri$-basis for }\iota(\g' \cap \z),\text{ and}\\
 (e_{a+1}, \dots, e_{h})	&\text{ is an $\lri$-basis for }\z.
\end{align*}
Denote by $\overline{\phantom{X}}$ the natural surjection $\g \to \g/\z$. 
Let $\mathbf{e}=(e_1, \dots, e_a)$. Then 
\[\overline{\mathbf{e}}=(\overline{e_1}, \dots, \overline{e_a})\] is an $\lri$-basis of $\g/\z$.
The $e_i$ can be chosen so that there are nonnegative integers $c_1,\dots, c_{b}$ with the property that 
\begin{align*}
 (\overline{\pi^{c_1}e_{a-k+1}}, \dots, \overline{\pi^{c_k}e_{a}})	&\text{ is an $\lri$-basis of }\overline{\g'+\z}\text{ and}\\
 (\pi^{c_{k+1}}e_{a+1}, \dots, \pi^{c_b}e_{a-k+b})			&\text{ is an $\lri$-basis of }\g'\cap\z,
\end{align*}
by the elementary divisor theorem.
Fix an $\lri$-basis $\mathbf{f}=(f_1, \dots, f_b)$ for $\g'$ satisfying
\begin{align*}
 (\overline{f_1}, \dots, \overline{f_k})&=(\overline{\pi^{c_1}e_{a-k+1}}, \dots, \overline{\pi^{c_k}e_{a}})	\text{ is an $\lri$-basis of }\overline{\g'+\z}\text{ and}\\
 (f_{k+1}, \dots, f_{b})&=(\pi^{c_{k+1}}e_{a+1}, \dots, \pi^{c_b}e_{a-k+b})					\text{ is an $\lri$-basis of }\g'\cap\z.
\end{align*}
For $i,j \in [a]$ and $k \in [b]$, let $\lambda_{ij}^{k}\in \lri$ be the structure constants satisfying  \vspace{-0.2cm}
\[[e_i,e_j]=\sum_{k=1}^{b}\lambda_{ij}^{k}f_k.\] \vspace{-0.2cm}
The following matrices were previously defined in~\cite[Definition~2.1]{ObVo15}.
\begin{dfn}\label{commutator} The $A$\emph{-commutator} and the $B$\emph{-commutator matrices} of $\g$ with respect to $\mathbf{e}$ and $\mathbf{f}$ are
\begin{align*}
 A(X_1, \dots, X_a)&=\left( \sum_{j=1}^{a}\lambda_{ij}^{k}X_j\right)_{ik}\in \Mat_{a \times b}(\lri[\underline{X}]),\text{ and} \\
 B(Y_1,\dots, Y_b)&=\left( \sum_{k=1}^{b}\lambda_{ij}^{k}Y_k\right)_{ij}\in \Mat_{a \times a}(\lri[\underline{Y}]),
\end{align*}
respectively, where $\underline{X}=(X_1, \dots, X_a)$ and $\underline{Y}=(Y_1, \dots, Y_b)$ are independent variables. 
\end{dfn}
For each $\mathbf{y}\in \lri^b$, the matrix $B(\mathbf{y})$ is antisymmetric.
Fix $N \in \N$. The congruence quotient $\G_N$ is a finite $p$-group of nilpotency class~$c$. 
Set $\gn:=\Lambda \otimes_{\lri} \lrip$ and $\zn=\z\otimes_{\lri} \lrip$, and let $\gnc=\g\otimes_\lri \lrip$.

Tensoring~$\mathbf{e}$ and $\mathbf{f}$ with $\lrip$ yields ordered sets 
\[\mathbf{e}_N=(e_{1}^{N}, \dots, e_{a}^{N})\text{ and }\mathbf{f}_N=(f_{1}^{N},\dots,f_{b}^{N})\] such that 
$\overline{\mathbf{e}}=(\overline{e_{1}^{N}}, \dots,\overline{e_{a}^{N}})$ is an $\lrip$-basis for $\gn/\zn$ and $\mathbf{f}_N$ is an $\lrip$-basis for $\gnc$ as $\lrip$-modules, where $\overline{~\cdot~}$ is the natural surjection $\gn \to \gn/\zn$.

Given an element $\omega$ of $\widehat{\gn}=\Hom_{\lri}(\gn,\C^{\times})$, set 
\[ B_{\omega}^{N}:  \gn \times \gn ~\rightarrow 	~	 \C^{\times}  ,~ (u,v) 		 \mapsto	 \omega([u,v]).\] 
The \emph{radical} of $B_{\omega}^{N}$ is 
\[\rad=\{u \in \gn\mid \text{ for all } v \in \gn :~B_{\omega}^{N}(u,v)=1 \}.\]
Observe that $B_{\omega}$ depends only on the restriction of~$\omega$ to $\gnc$. 
For this reason, given $\widetilde{\omega} \in \widehat{\gnc}$ and any extension $\omega$ of~$\widetilde{\omega}$ to $\gn$, we write~$B_{\widetilde{\omega}}$ for~$B_{\omega}$.

For $x \in \gn/\zn$, following~\cite[Section~3.1]{ObVo15}, we define 
\begin{alignat*}{5} \ad_x:\gn/\zn & \to \gnc 	&\hspace{0.5cm}&\text{and}\hspace{0.5cm} & \ad_{x}^{\star}:  \widehat{\gnc} &\to \widehat{\gn/\zn}\\
					y & \mapsto [y,x] 			&\hspace{0.5cm}&& ~\omega 	& \mapsto \omega \circ \ad_{x}. 
\end{alignat*}
Observe that in the definition of the $\lrip$-module homomorphism $\ad_x$ we identify~$\gn/\zn$ with the $\lrip$-submodule of~$\gn$ generated by~$\textbf{e}_N$. 

The dimensions of the irreducible complex representations and the sizes of conjugacy classes of~$\G_N$ are powers of~$p$ and, according to~\cite[Section~3]{ObVo15}, 
for $c<p$, the numbers $r_{p^i}(\G_N)$ and $c_{p^i}(\G_N)$ are given by
\begin{align}
 \label{chi} r_{p^i}(\G_N)&=\left|\left\{\omega \in \widehat{\gnc} \bigm| |\rad:\zn|=p^{-2i}|\gn/\zn|\right\}\right|\hspace{0.05cm}|\gn/\gnc|~p^{-2i}, \\[0.1cm] 
 \label{cci}\cc_{p^i}(\G_N)&=\left|\left\{x \in \gn/\zn \bigm| |\Ker(\ad_{x}^{\star})|=p^{-i}|\widehat{\gnc}|\right\}\right|\hspace{0.05cm}|\zn|~p^{-i}.
\end{align}
The first formula is a consequence of the Kirillov orbit method, which reduces the problem of enumerating the characters of $\G_N$ to the problem of determining the indices 
in $\gn$ of $\rad$ for $\omega \in \widehat{\gnc}$; see~\cite[Theorem 3.1]{ObVo15}. 
The second formula reflects the fact that the Lazard correspondence induces an order-preserving correspondence between subgroups of $\G_N$ and sublattices of $\gn$, and maps
normal subgroups to ideals. Moreover, centralizers of elements in~$\G_N$ correspond to centralizers of elements in~$\gn$ under the Lazard correspondence.

The cardinalities of $\gn$ and $\gn/\zn$ are powers of $q$, and hence so are the cardinalities of $\rad/\z$ and $\Ker(\ad_{x}^{\star})$.
It follows that $r_{n}(\G_N)$ and $c_n(\G_N)$ can only be nonzero if $n$ is a power of $q$.

The next step is to relate~\eqref{chi} and~\eqref{cci} to the commutator matrices of~$\gn$ with respect to~$\mathbf{e}_N$ and $\mathbf{f}_N$.

Using arguments analogous to the ones of~\cite[Section~2]{ObVo15}, we define the coordinate systems
\begin{align*} \phi_N: \gn/\zn &\to (\lrip)^a, && x=\sum_{j=1}^{a}x_je_{j}^{N} \mapsto \mathbf{x}=(x_1, \dots, x_a),&\\
 \psi_N:\widehat{\gnc} &\to (\lrip)^b, && \omega=\sum_{j=1}^{b}y_jf_{j}^{N\vee} \mapsto \mathbf{y}=(y_1, \dots, y_b),&
\end{align*}
where, for $N \in \N_0$, $\mathbf{f}_{N}^{\vee}=(f_{1}^{N\vee}, \dots, f_{b}^{N\vee})$ is the dual $\lri/\p$-basis for \[\widehat{\gnc}=\Hom_{\lri}(\gnc,\C^{\times}).\]
We notice that $\g_1/\z_1$ and $\g'_1$ are regarded as $\lri/\p$-vector spaces in the construction of~\cite[Section~2]{ObVo15}. 
In the coordinate systems above, we regard $\gn/\zn$ and $\gnc$ as $\lri/\p^N$-modules for all $N\in \N$.

\begin{lem}\label{ObVo3.3} Given $x \in \gn/\zn$ with $\phi_N(x)=\mathbf{x}$, and $\omega \in \widehat{\gnc}$ with $\psi_N(\omega)=\mathbf{y}$, the following holds.
\begin{align*}
 x \in \rad/\zn \text{ if and only if } B(\mathbf{y})\mathbf{x}^{\textup{tr}}=0,\\
 \omega \in \Ker(\ad_{x}^{\star}) \text{ if and only if } A(\mathbf{x})\mathbf{y}^{\textup{tr}}=0.
\end{align*}
\end{lem}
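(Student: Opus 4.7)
The plan is to unpack the defining conditions on each side and match them through the structure constants of $\Lambda$. Everything reduces to checking that, at level $N$, the radical of $B_\omega^N$ and the kernel of $\ad_x^\star$ can be read off from the $B$- and $A$-commutator matrices of Definition~\ref{commutator} exactly as in the level-one case treated in~\cite[Lemma~3.3]{ObVo15}.

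First I would fix coordinates. Lift $x\in\gn/\zn$ to $\sum_{j=1}^{a}x_j e_{j,N}\in\gn$ (well-defined up to $\zn$, which is harmless in every bracket that follows). Identify $\omega\in\widehat{\gnc}$ with $\mathbf{y}\in(\lrip)^b$ by fixing a faithful additive character $\psi\colon\lrip\to\C^{\times}$ (one with $\ker\psi=0$) and setting $\omega\bigl(\sum_k v_k f_{k,N}\bigr)=\psi\bigl(\sum_k y_k v_k\bigr)$; this is the meaning of the dual basis $\mathbf{f}_N^{\vee}$ and the coordinate system $\psi_N$.

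Second I would establish the first equivalence. Since $(e_{a+1,N},\ldots,e_{h,N})$ is an $\lrip$-basis of $\zn$, every bracket $[x,e_{j,N}]$ with $j>a$ vanishes, so by $\lri$-bilinearity the condition $\omega([x,v])=1$ for all $v\in\gn$ is equivalent to $\omega([x,e_{i,N}])=1$ for each $i\in[a]$. Using the structure constants from Definition~\ref{commutator},
\[ [x,e_{i,N}] \;=\; \sum_{j=1}^{a}x_j[e_{j,N},e_{i,N}] \;=\; -\sum_{j=1}^{a}\sum_{k=1}^{b}\lambda_{ij}^{k}\,x_j\,f_{k,N}, \]
and applying $\omega$ together with the faithfulness of $\psi$ turns this into $\sum_{j,k}\lambda_{ij}^{k}x_j y_k\equiv 0\pmod{\p^N}$ for each $i\in[a]$. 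This is precisely the $i$-th entry of the equation $B(\mathbf{y})\mathbf{x}^{\textup{tr}}=0$.

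The second equivalence is a formal consequence of the first. On the condition side, antisymmetry of the Lie bracket gives $\omega([z,x])=\omega([x,z])^{-1}$, so $\omega\in\Ker(\ad_x^\star)$ (that is, $\omega([z,x])=1$ for all $z\in\gn/\zn$) is logically identical to $x\in\rad/\zn$. On the matrix side, unpacking Definition~\ref{commutator} yields $(A(\mathbf{x})\mathbf{y}^{\textup{tr}})_i=\sum_{k}\bigl(\sum_{j}\lambda_{ij}^{k}x_j\bigr)y_k=\sum_{j,k}\lambda_{ij}^{k}x_j y_k=(B(\mathbf{y})\mathbf{x}^{\textup{tr}})_i$, so the two linear systems coincide. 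The main obstacle I anticipate is justifying the dual-basis identification at level $N>1$: in \cite[Section~2]{ObVo15} the space $\g'_1$ is an $\F_q$-vector space, and fixing a nontrivial character of $\F_q$ identifies the Pontryagin dual with the algebraic dual, so that $\mathbf{f}_1^{\vee}$ is literally the dual basis of $\mathbf{f}_1$. For general $N$ one must verify that a faithful character of $\lrip$ induces an $\lri$-module isomorphism $\gnc\cong\widehat{\gnc}$ sending $f_{k,N}\mapsto f_{k,N}^{\vee}$, so that the computations above are unambiguous. Once this duality is in place, the rest is straightforward bookkeeping.
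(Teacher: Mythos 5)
Your proof is correct and follows essentially the same route as the paper: both reduce the two membership conditions to the single scalar condition $w([x,v])=1$ for all $v\in\gn$, express that condition in coordinates via the structure constants, and read off the linear system. The paper chooses to write out the computation yielding $A(\mathbf{x})\mathbf{y}^{\mathrm{tr}}=0$ (the $\Ker(\ad_x^\star)$ side) and asserts the other follows from the same coordinate expression; you instead write out the $B(\mathbf{y})\mathbf{x}^{\mathrm{tr}}=0$ side and then make the bridge explicit by observing the entrywise identity $(A(\mathbf{x})\mathbf{y}^{\mathrm{tr}})_i=\sum_{j,k}\lambda_{ij}^k x_j y_k=(B(\mathbf{y})\mathbf{x}^{\mathrm{tr}})_i$, a small point the paper leaves implicit. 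You are also more careful about the Pontryagin-duality bookkeeping: the paper writes $w=\prod_k (f_{k,N}^{\vee})^{y_k}$ and leaves the meaning of the exponentiation (the $\lrip$-module structure on $\widehat{\gnc}$) to the reader, whereas you pin it down with a fixed faithful additive character $\psi$ of $\lrip$ so that $\omega(\sum_k v_k f_{k,N})=\psi(\sum_k y_k v_k)$; the caveat you raise about extending the level-one identification to general $N$ is exactly the point one needs to (and can) verify, since $\gnc$ is a free $\lrip$-module on $\mathbf{f}_N$ and a faithful character of $\lrip$ identifies $\widehat{\gnc}$ with $\gnc$ as $\lrip$-modules. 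In short: same argument, with the implicit steps of the paper's sketch made explicit.
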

\begin{proof} 
An element $x \in \gn/\zn$ belongs to $\rad/\z$ exactly when $\omega[v,x]=1$, for all $v \in \gn/\zn$, whilst an element $\omega\in \widehat{\gnc}$ belongs to 
$\Ker(\ad_{x}^{\star})$ exactly when $\omega[v,x]=1$ for all $v \in \gn/\zn$. Expressing these conditions in coordinates, we see that both expressions hold.

We prove the second claim in detail.
Fix $x \in \gn/\zn$ with \[\phi_N(\overline{x})=\mathbf{x}=(x_1, \dots, x_a).\] 
It holds that 
\begin{equation}\label{eix}
\left[e_{i}^{N},x\right]=\sum_{j=1}^{a}x_j[e_{i}^{N},e_{j}^{N}]=\sum_{j=1}^{a}\sum_{l=1}^{b}\lambda_{ij}^{l}x_jf_{l}^{N}.  
\end{equation}
We want to determine which elements $\omega\in \widehat{\gnc}$ satisfy $\omega([v,x])=1$, for all $v\in\gn/\zn$.  
Consider $\psi_N(\omega)=\mathbf{y}=(y_1, \dots , y_b)$, i.e., $\omega=\prod_{k=1}^{b}(f_{k}^{N\vee})^{y_k}$. Because of~\eqref{eix}, for each $i\in[a]$, 
\[\omega([e_{i}^{N},x])=\prod_{k=1}^{b}\left(f_{k}^{N\vee}\left(\sum_{j=1}^{a}\sum_{l=1}^{b}\lambda_{ij}^{l}x_jf_{l}^{N}\right)\right)^{y_k}=
\prod_{k=1}^{b}\left(f_{k}^{N\vee}\left(f_{k}^{N}\right)\right)^{y_k\sum_{j=1}^{a}\lambda_{ij}^{k}x_j}.\]
This expression equals $1$ exactly when $\sum_{k=1}^{b}y_k\sum_{j=1}^{a}\lambda_{ij}^{k}x_j=0$. Now, by definition, $\sum_{j=1}^{a}\lambda_{ij}^{k}x_j=A(\mathbf{x})_{ik}$, where 
$A(\mathbf{x})$ is the $A$-commutator matrix of Definition~\ref{commutator} evaluated at $\mathbf{x}$.
Consequently, $\omega \in \Ker(\ad_{x}^{\star})$ if and only if 
\[\sum_{k=1}^{b}A(\mathbf{x})_{ik}y_k=0,\text{ for all }i\in [a],\] 
that is, $A(\mathbf{x})\mathbf{y}^{\textup{tr}}=0$.
\end{proof}

Applying Lemma~\ref{ObVo3.3} to~\eqref{chi}, we rewrite the numbers $r_{q^i}(\G_N)$ in terms of solutions of the system 
$B(\mathbf{y})\mathbf{x}^{\textup{tr}}=0$ and, applying Lemma~\ref{ObVo3.3} to~\eqref{cci}, we rewrite the numbers $\cc_{q^i}(\G_N)$ in terms of solutions of the system $A(\mathbf{x})\mathbf{y}^{\textup{tr}}=0$. 
In each case, we consider the elementary divisor type of the corresponding matrix.

Fix an elementary divisor type $\tilde{\nu}(B(\mathbf{y}))=(m_1, \dots, m_{\ub})$, where 
\[2\ub=\max\{\rk_{\text{Frac}(\lri)}B(\mathbf{z}) \mid \mathbf{z}\in \lri^b\}.\]
Since $B(\mathbf{y})$ is similar to the matrix $\textup{Diag}(\pi^{m_1},\pi^{m_1}, \dots,\pi^{m_{\ub}},\pi^{m_{\ub}},\mathbf{0}_{a-2\ub})$, where $\mathbf{0}_{a-2\ub}=(0,\dots,0)\in \Z^{a-2\ub}$, 
the system $B(\mathbf{y})\mathbf{x}^{\textup{tr}}=0$ in $\lri/\p^N$ is 
equivalent to
\[\begin{cases}	x_1\phantom{2u_b-} \equiv x_2\phantom{ub} \equiv &0 ~\bmod \p^{N-m_1},\\ 
		x_3\phantom{2u_b-} \equiv x_4\phantom{ub} \equiv &0 ~\bmod \p^{N-m_2},\\
		&\vdots \\ 
		x_{2\ub-1} \equiv x_{2\ub} \equiv &0 ~\bmod \p^{N-m_{\ub}}.
\end{cases}\]

For $2\ub <a$, the elements $x_{2\ub+1}, \dots , x_a$ are arbitrary elements of $\lri/\p^N$, and
\[|\{x \in \lri/\p^N \mid x \equiv 0 \bmod \p^{N-m_j}\}|=q^{m_j}.\] 
Hence, the number of solutions of $B(\mathbf{y})\mathbf{x}^{\textup{tr}}=0$ in $\lri/\p^N$ is 
\[q^{2(m_1+ \dots +m_{\ub})+(a-2\ub)N}.\]
In other words, $\tilde{\nu}(B(\mathbf{y}))=(m_1, \dots, m_{\ub})$ implies \[|\rad / \zn|=q^{2(m_1+ \dots + m_{\ub})+(a-2\ub)N}.\]
Lemma~\ref{ObVo3.3} then assures that $|\rad / \zn|=q^{-2i}|\gn/\zn|=q^{aN-2i}$, whenever $B(\mathbf{y})$ has elementary divisor type $(m_1, \dots, m_{\ub})$ satisfying 
\[\sum_{j=1}^{\ub}m_j=\ub N-i.\]

Consequently, for $r=\rk(\g/\g')=h-b$, expression~\eqref{chi} can be rewritten as 
\begin{equation}\label{chin} r_{q^i}(\G_N)=\sum_{\m \in \Dr_{B}^{N}}
|\{\mathbf{y} \in (\lri/\p^N)^b \mid \tilde{\nu}(B(\mathbf{y}))=\m\}|q^{rN-2i},
\end{equation}
where 
 \[\Dr_{B}^{N}:=\left\{\m=(m_1, \dots, m_{\ub}) \in \N_{0}^{\ub} \bigm| m_1 \leq \dots \leq m_{\ub} \leq N,~\textstyle\sum\limits_{i=1}^{\ub}m_i=\ub N-i\right\}.\]
Analogously, if $\nu(A(\mathbf{x}))=(m_1, \dots, m_{\ua})$, where 
\[\ua:=\max\{\rk_{\text{Frac}(\lri)}A(\mathbf{z}) \mid \mathbf{z}\in \lri^a\},\] 
the equality $A(\mathbf{x})\mathbf{y}^{\textup{tr}}=0$ has $q^{m_1+m_2 +\dots +m_{\ua}+(b-\ua)N}$ solutions in $\lri/\p^N$. For $z=\rk(\z)=h-a$, this yields
\begin{equation}\label{ccin} c_{q^i}(\G_N)=
\sum_{\m \in \Dr_{A}^{N}}
|\{\mathbf{x} \in (\lri/\p^N)^a \mid {\nu}(A(\mathbf{x}))=\m\}|q^{zN-i},
\end{equation}
where 
 \[\Dr_{A}^{N}:=\left\{\m=(m_1, \dots, m_{\ua}) \in \N_{0}^{\ua} \bigm| m_1 \leq \dots \leq m_{\ua} \leq N,~\textstyle\sum\limits_{i=1}^{\ua}m_i=\ua N-i\right\}.\]

For a matrix $\mathcal{R}(\underline{Y})=\mathcal{R}(Y_1, \dots, Y_n)$ of polynomials as the one at the beginning of Section~\ref{padicintegral} and for 
$\mathbf{m}=(m_1, \dots, m_{\ur})\in \N_{0}^{\ur}$, define
\[ \mathfrak{W}_{\m}^{\mathcal{R}}(\lrip):=\{\mathbf{y} \in (\lri/\p^N)^n \mid \vry=\mathbf{m}\}.\]
Expressions~\eqref{chin} and~\eqref{ccin} are written in terms of cardinalities of such sets, which are related to the numbers $\Nr_{\m}(\lrip)$ as follows.
Write 
\[\mathbf{m}-m=(m_1-m, \dots, m_{\ur}-m)\text{ for all }m \in \N_{0}.\]
If $\mathcal{R}(\mathbf{y})$ is such that $v_\p(\mathbf{y})=v_{\p}(\mathcal{R}(\mathbf{y}))$ for all $\mathbf{y} \in \lri^n$, then
\begin{equation}\label{wm} |\mathfrak{W}_{\m}^{\mathcal{R}}(\lrip)|= \Nr_{\m-m_1}(\lri/\p^{N-m_1}).\end{equation}
Indeed, the map $\mathfrak{N}_{\m-m_1}^{\mathcal{R}}(\lri/\p^{N-m_1}) \to \mathfrak{W}_{\m}^{\mathcal{R}}(\lrip)$ given by $\mathbf{y}\mapsto \pi^{m_1}\mathbf{y}$ is a bijection.
Equality~\eqref{wm} provides the following reformulations of~\eqref{chin} and~\eqref{ccin}.
\hspace{-0.5cm}
\begin{lem}\label{chin2} 
For each $i \in \N_0$ and $N\in \N_0$,
\begin{align}
\label{lemch} r_{q^i}(\G_N)&=\sum_{\m \in \mathcal{D}_{B}^{N}}\Nb_{\m-m_1}(\lri/\p^{N-m_1})q^{rN-2i},\\
\label{lemcc} c_{q^i}(\G_N)&=\sum_{\m \in \mathcal{D}_{A}^{N}}\Na_{\m-m_1}(\lri/\p^{N-m_1})q^{zN-i}.
\end{align}
\end{lem}
\begin{rmk}\label{condition2}
 As explained in Section~\ref{groups}, for $\ri$-Lie lattices $\Lambda$ of nilpotency class~$2$, a different construction for~$\G=\G_{\Lambda}$ is given in~\cite[Section~2.4.1]{StVo14} which does not require the assumption $\Lambda' \subseteq 2\Lambda$. 
 For groups associated to such schemes a Kirillov orbit method formalism was formulated which is valid for all primes; see~\cite[Section~2.4.2]{StVo14}. 
 Consequently, \cite[Lemma~2.13]{StVo14} assures that~\eqref{lemch} holds for all primes~$\p$ if~$\Lambda$ has nilpotency class~$2$. 
\end{rmk}

\subsection{\texorpdfstring{$\p$}{p}-adic integrals}\label{spadic}
We now write the local factors of the bivariate zeta functions of $\G(\ri)$ in terms of Poincaré series such as~\eqref{poincare}. 

Recall from Section~\ref{chcc} that the dimensions of irreducible complex representations as well as the sizes of the conjugacy classes of 
$\G(\lri)$ are powers of $q$, allowing us to write the representation and the conjugacy class zeta functions of the congruence quotient $\G_N=\G(\lrip)$ as  
\[\rzeta_{\G_N}(s)=\sum_{i=0}^{\infty}r_{q^i}(\G_N)q^{-is}~\text{ and }~\cczeta_{\G_N}(s)=\sum_{i=0}^{\infty}c_{q^i}(\G_N)q^{-is}.\]
These sums are finite, since $\G_N$ is a finite group.
Applying this to~\eqref{localfactors}, the definition of the local factors of the bivariate zeta functions, one obtains 
\begin{align*}
 \rlvlzf_{\G(\lri)}(s_1,s_2)&=\sum_{N=0}^{\infty}\sum_{i=0}^{\infty}r_{q^i}(\G_N)q^{-is_1-Ns_2} \text{ and }\\
 \clvlzf_{\G(\lri)}(s_1,s_2)&=\sum_{N=0}^{\infty}\sum_{i=0}^{\infty}\cc_{q^i}(\G_N)q^{-is_1-Ns_2}.
\end{align*}
Recall that $z=\rk(\z)=h-a$ and $r=\rk(\g/\g')=h-b$. If $c=2$ or $p>c>2$, then~\eqref{lemch} yields 
\begin{equation}\label{rlocal}\rlvlzf_{\G(\lri)}(s_1,s_2)=\sum_{N=0}^{\infty}\sum_{i=0}^{\infty}\sum_{\m\in \mathcal{D}_{B}^{N}}
\Nb_{\m-m_1}(\lri/\p^{N-m_1})q^{-(s_2-r)N-(2+s_1)i}.
\end{equation}
If $p>c$, then~\eqref{lemcc} yields
\begin{equation}\label{cclocal}\clvlzf_{\G(\lri)}(s_1,s_2)= 
\sum_{N=0}^{\infty}\sum_{i=0}^{\infty}\sum_{\m \in \mathcal{D}_{A}^{N}}
\Na_{\m-m_1}(\lri/\p^{N-m_1})q^{-(s_2-z)N-(1+s_1)i}. 
\end{equation}
We now show how to rewrite these sums as Poincaré series of the form~\eqref{poincare}. In preparation for this, we need two lemmata. 
\begin{lem}\label{terms} Let $s$ be a complex variable, $(a_m)_{m\in \N_0}$ a sequence of real numbers, and $q\in \Z_{\geq 2}$. Provided both series converge, the following holds:
 \[\sum_{N=1}^{\infty}\sum_{m=0}^{N-1}a_mq^{-sN}= \frac{q^{-s}}{1-q^{-s}}\left(\sum_{N=0}^{\infty}a_Nq^{-sN}\right).\]
\end{lem}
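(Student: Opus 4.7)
The plan is to prove the identity by swapping the order of the two summations on the left-hand side and then recognising the resulting inner sum as a geometric series. Concretely, I would first note that the double sum on the left ranges over the set of pairs $(N,m)$ with $N\geq 1$ and $0\leq m\leq N-1$, which is the same set as the pairs $(N,m)$ with $m\geq 0$ and $N\geq m+1$. Since both series are assumed to converge (and hence the double series converges absolutely, given the monomial structure in $q^{-sN}$), Fubini's theorem for sums justifies reindexing:
\[
\sum_{N=1}^{\infty}\sum_{m=0}^{N-1}a_m q^{-sN}
=\sum_{m=0}^{\infty} a_m \sum_{N=m+1}^{\infty} q^{-sN}.
\]

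Next, I would evaluate the inner geometric sum. Writing $N=m+1+k$ with $k\geq 0$ yields
\[
\sum_{N=m+1}^{\infty} q^{-sN} = q^{-s(m+1)}\sum_{k=0}^{\infty} q^{-sk}
= \frac{q^{-s(m+1)}}{1-q^{-s}},
\]
where convergence of this geometric series is implicit in the convergence hypothesis on the original double sum. Substituting back gives
\[
\sum_{m=0}^{\infty} a_m \cdot \frac{q^{-s(m+1)}}{1-q^{-s}}
= \frac{q^{-s}}{1-q^{-s}}\sum_{m=0}^{\infty} a_m q^{-sm},
\]
which is the desired expression after relabelling the index $m$ as $N$ in the remaining sum.

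There is no real obstacle here: the only subtlety is to ensure that the interchange of summations is legitimate. This is guaranteed by the assumed convergence of both series together with the fact that the terms factor as $a_m \cdot q^{-sN}$, so absolute convergence of the rearranged double series follows directly from the convergence of $\sum_{N\geq 0} a_N q^{-sN}$ and of the geometric series $\sum_{N\geq 0} q^{-sN}$ (the latter being why the hypothesis $q\in\Z\setminus\{1\}$, together with the implicit condition $|q^{-s}|<1$ needed for convergence, is imposed).
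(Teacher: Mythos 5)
Your proof is correct, but it takes a genuinely different route from the paper's. You swap the order of summation (Fubini), so that the inner sum over $N$ becomes a geometric tail $\sum_{N\geq m+1}q^{-sN}=q^{-s(m+1)}/(1-q^{-s})$, and then factor out $q^{-s}/(1-q^{-s})$. The paper instead multiplies the left-hand side by $(1-q^{-s})$, shifts the index $N\mapsto N+1$ in one copy, and lets the two double sums telescope: only the diagonal terms $a_Nq^{-s(N+1)}$ survive, giving $q^{-s}\sum_{N\geq 0}a_Nq^{-sN}$. Both arguments are short; yours is perhaps the more standard ``interchange and sum a geometric series'' computation, while the paper's multiply-and-telescope trick avoids any explicit reindexing of the domain and keeps the manipulation within a single unconditional chain of equalities (modulo the same convergence caveats). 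One small caution about your justification: convergence of $\sum_N a_N q^{-sN}$ for a general real sequence $(a_m)$ does not by itself yield \emph{absolute} convergence of that series, which is what Fubini for the double sum actually needs; in the paper's application the $a_m$ are nonnegative so this is harmless, but as stated your appeal to Fubini is slightly stronger than the hypothesis ``both series converge'' strictly grants, whereas the paper's telescoping sidesteps this by never reordering the double sum.
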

\begin{proof} This is due to a short manipulation of geometric series; see~\cite[Lemma~3.2.9]{PLphd}.
\end{proof}
\begin{lem}\label{rewr} Let $s$ and $t$ be complex variables. Let $\mathcal{R}(\underline{Y})=\mathcal{R}(Y_1, \dots, Y_n)$ be a matrix of polynomials 
$\mathcal{R}(\underline{Y})_{ij}\in\lri[\underline{Y}]$. If $\mathcal{R}$ is not antisymmetric, set $u=\max\{\rk_{\Frac(\lri)}\mathcal{R}(\mathbf{z}) \mid \mathbf{z}\in \lri^n\}$. Otherwise, set $u=\tfrac{1}{2}\max\{\rk_{\Frac(\lri)}\mathcal{R}(\mathbf{z}) \mid \mathbf{z}\in \lri^n\}$. Moreover, let $q=|\lri/\p|$. Provided both series converge, the following holds:
 \begin{align}\label{sumlema}&\sum_{N=0}^{\infty}\sum_{i=0}^{\infty}\sum_{\m \in \Dr^N(uN-i)}
\Nr_{\m-m_1}(\lri/\p^{N-m_1})q^{-sN-ti}\\
 &=\frac{1}{1-q^{-s}}\left(1+\sum_{N=1}^{\infty}\sum_{\m \in \N_{0}^{u}}\hspace{-0.2cm}\Nr_{\m}(\lrip)q^{-(s+ut)N+t \sum\limits_{j=1}^{u}m_j}\right),\nonumber
 \end{align}
where for each $c \in \N_0$, 
\[\Dr^N(c):=\left\{\m=(m_1,\dots,m_u)\in \N_{0}^{u} \mid m_1\leq \dots \leq m_u \leq N,\textstyle\sum\limits_{i=1}^{u}m_i=c\right\}.\] 
 \end{lem} 
\begin{proof}
Set $\mathbf{m}=(m_1, \dots, m_u)$ and recall the notation 
\[\mathbf{m}-m=(m_1-m,\dots, m_u-m)\text{ for }m \in \N_0.\]

As $\Nr_{\m}(\lrip)=0$ unless $0=m_1 \leq m_2 \leq \dots \leq m_{u} \leq N$, in which case \
\[0\leq \textstyle\sum_{j=1}^{u}m_j \leq uN,\] 
the condition $\textstyle\sum_{j=1}^{u}m_j=uN-i$ implies that the only values of~$i$ which are relevant for the sum~\eqref{sumlema} are $0 \leq i \leq uN$.
Hence, the expression on the left-hand side of~\eqref{sumlema} can be rewritten as
\begin{equation}\label{rewr2} 1+\sum_{N=1}^{\infty}\sum_{i=0}^{uN}\sum_{\m\in\Dr^N(uN-i)} 
\Nr_{\m-m_1}(\lri/\p^{N-m_1})q^{-sN-ti}.
\end{equation}
In the following, we make use of the notation \[\Dl^N(c):=\left\{\m=(m_1,\dots,m_u)\in \N_{0}^{u} \mid m_1\leq \dots \leq m_u \leq N,\textstyle\sum\limits_{i=1}^{u}m_i\leq c\right\}.\] 
Restricting the summation in~\eqref{rewr2} to $m_1=0$ yields
\[\sum_{N=1}^{\infty}\sum_{\substack{\m \in \Dl^N((u-1)N)\\ m_1=0}}
\Nr_{\m}(\lrip)q^{-sN-t(uN-\sum\limits_{j=2}^{u}m_j)}.\]
Since $\Nr_{\m}(\lrip)=0$ unless $0=m_1 \leq m_2 \leq \dots \leq m_{u} \leq N$, we may rewrite this sum as 
\begin{equation*} \sum_{N=1}^{\infty}\sum_{\m \in \N_{0}^{u} }\Nr_{\m}(\lrip)q^{-(s+ut)N+t\sum\limits_{j=1}^{u}m_j}=:\mathcal{S}(s,t).
\end{equation*}
Our goal now is to write the part of the summation in~\eqref{rewr2} with $m_1> 0$ in terms of $\mathcal{S}(s,t)$. 
Restricting the summation in~\eqref{rewr2} to $m_1>0$ yields
\begin{align}
& \sum_{N=1}^{\infty}\sum_{i=0}^{u N}\sum_{\substack{\m \in \Dr^N(uN-i)\\ m_1> 0}}
\Nr_{\m-m_1}(\lri/\p^{N-m_1})q^{-sN-ti}\nonumber \\
\label{change}&= \sum_{N=1}^{\infty}\sum_{m=1}^{N}\sum_{\substack{\m \in \Dl^N(uN)\\ m_1=m}}
\Nr_{\m-m_1}(\lri/\p^{N-m_1})q^{-sN-t(uN-\sum\limits_{j=1}^{u}m_j)}.
\end{align}
By writing $m'_j=m_j-m_1$, we obtain $\textstyle\sum_{j=1}^{u}m_j=um_1+\sum_{j=2}^{u}m'_j$. Moreover, for each $m \in [N]$,
\[\{\m-m \mid \m \in \Dl^N(uN),~ m_1=m\}=\{\m \in \Dl^{N-m}(u(N-m)) \mid m_1=0\}.\] Then, we may rewrite~\eqref{change} as 
\begin{align}
&\sum_{N=1}^{\infty}\sum_{m=1}^{N}\sum_{\substack{\m \in \Dl^{N-m}(u(N-m))\\ m_1=0}}
 \Nr_{\m}(\lri/\p^{N-m})q^{-sN-t(u(N-m)-\sum\limits_{j=2}^{u}m_{j})}\nonumber\\
 &=\sum_{N=1}^{\infty}q^{-sN}\sum_{m=0}^{N-1}\sum_{\substack{\m \in \Dl^{m}(um)\\ m_1=0}}
 \Nr_{\m}(\lri/\p^m)q^{t\sum\limits_{j=2}^{u}m_{j}-tum} \nonumber\\ 
&=\label{notzero}\sum_{N=1}^{\infty}q^{-sN}\sum_{m=0}^{N-1}\sum_{\m \in \N_{0}^{u}}
\Nr_{\m}(\lri/\p^m)q^{t\sum\limits_{j=1}^{u}m_j-tum}.
\end{align}
Apply Lemma~\ref{terms} to~\eqref{notzero} by setting
\[a_m:=\sum_{\m \in \N_{0}^{u}}\Nr_{\m}(\lri/\p^m)q^{t\sum_{j=1}^{u}m_j-tum}.\]
This gives that~\eqref{notzero} equals 
\begin{align*}
&\frac{q^{-s}}{1-q^{-s}}\left(1+\sum_{N=1}^{\infty}\sum_{\m \in \N_{0}^{u}}\Nr_{\m}(\lrip)q^{-(s+ut)N+t\sum\limits_{j=1}^{u}m_j}\right) 
&=\frac{q^{-s}}{1-q^{-s}}\left(1+\mathcal{S}(s,t)\right).
 \end{align*}
Combining the expressions for the parts of the sum with $m_1=0$ and $m_1> 0$ yields
\begin{align*}&\sum_{N=0}^{\infty}\sum_{i=0}^{\infty}\sum_{\m \in \Dr^N(uN-i)}
\Nr_{\m}(\lrip)q^{-sN-ti}\\
&=1+\mathcal{S}(s,t)+\frac{q^{-s}}{1-q^{-s}}\left( 1+\mathcal{S}(s,t)\right)=\frac{1}{1-q^{-s}}\left(1+\mathcal{S}(s,t)\right).\qedhere
 \end{align*}
\end{proof}

\begin{pps} If either $c=2$ or $p>c>2$, then 
\begin{align}\label{rint} &\rlvlzf_{\G(\lri)}(s_1,s_2)=\\ \frac{1}{1-q^{r-s_2}}
&\left(1+\sum_{N=1}^{\infty}\sum_{\m\in\N_{0}^{\ub}} \Nb_{\m}(\lrip)q^{-N(\ub s_1+s_2+2\ub-r)-2\sum\limits_{j=1}^{\ub}m_j\frac{(-s_1-2)}{2}}\right).\nonumber 
\end{align}
Moreover, if $p>c$, then 
\begin{align}
\label{cint} &\clvlzf_{\G(\lri)}(s_1,s_2)= \\\frac{1}{1-q^{z-s_2}}
&\left(1+\sum_{N=1}^{\infty}\sum_{\m\in\N_{0}^{\ua}}\Na_{\m}(\lrip)q^{-N(\ua s_1+s_2+\ua-z)-\sum\limits_{j=1}^{\ua}m_j(-s_1-1)}\right).\nonumber
\end{align}
\end{pps}
\begin{proof}
By setting $s=s_2-r$ and $t=2+s_1$, and considering $\mathcal{R}$ to be the $B$-commutator matrix of $\Lambda$ on the left-hand side of~\eqref{sumlema}, we obtain~\eqref{rlocal}. 
Under these substitutions, Lemma~\ref{rewr} shows~\eqref{rint}.
Analogously, by setting $s=s_2-z$ and $t=1+s_1$, and considering $\mathcal{R}$ to be the $A$-commutator matrix of $\Lambda$, the left-hand side of~\eqref{sumlema} equals~\eqref{cclocal}, so that, under these substitutions, Lemma~\ref{rewr} shows~\eqref{cint}.
\end{proof}
Expression~\eqref{rint} is of the form~\eqref{poincareas} with 
\[t=\ub s_1+s_2+2\ub-r\text{ and }r_k=\frac{-s_1-2}{2}\text{ for each }k \in [\ub],\] 
whilst~\eqref{cint} is~\eqref{poincare} with 
\[t=\ua s_1+s_2+\ua-z\text{ and }r_k=-s_1-1\text{ for each }k \in [\ua].\] 
Therefore these choices of $t$ and $\underline{r}$ applied to~\eqref{Vo10as} and to~\eqref{Vo14gen} yield the following.
Recall $a+z=\rk(\g/\z)+\rk(\z)=\rk(\g)=h$ and $b+r=\rk(\g')+\rk(\g/\g')=h$. For $k \in \N$ write $\mathbf{1}_{k}=(1, \dots , 1) \in \Z^k$. 
\begin{pps}\label{intpadic} If either $c=2$ or $p>c>2$, then 
\begin{align}
\label{rpadic} &\rlvlzf_{\G(\lri)}(s_1,s_2)=
&\frac{1}{1-q^{r-s_2}}\left(1+\pint_{B}\left(\left(\tfrac{-s_1-2}{2}\right)\mathbf{1}_{\ub},\ub s_1+s_2+2\ub-h-1\right)\right). 
\end{align}
Moreover, if $p>c$, then 
\begin{align}
\label{ccpadic} &\clvlzf_{\G(\lri)}(s_1,s_2)=
\frac{1}{1-q^{z-s_2}}\left(1+\pint_{A}\left((-s_1-1)\mathbf{1}_{\ua},\ua s_1+s_2+\ua-h-1\right)\right).
\end{align}
\end{pps}
Specialization~\eqref{specialization} applied to~\eqref{rpadic} and to~\eqref{ccpadic} yields 
\begin{align}\label{cctok}\kzeta_{\G(\lri)}(s)&=
\frac{1}{1-q^{z-s}}\left(1+\pint_{A}\left(-\mathbf{1}_{\ua},s+\ua-h-1\right)\right),\\
&\label{rtok}=\frac{1}{1-q^{r-s}}\left(1+\pint_{B}\left(-\mathbf{1}_{\ub},s+2\ub-h-1\right)\right).
\end{align}

\begin{rmk}
Formula~\eqref{cctok} coincides with the $\p$-adic integral obtained from the $\p$-adic integral~\cite[formula~(4.3)]{Ro17} together with the 
specialization given in \cite[Theorem~1.7]{Ro17}.

In fact, for each $x \in \g$, let $\ad_x:\g \to \g'$ be the adjoint homomorphism 
\[\ad_{x}(z)=[z,x],\text{ for all }z \in \g.\] 
As in Section~\ref{chcc}, let $\mathscr{B}=(e_1, \dots, e_h)$ be a basis of $\g$ with the properties described there; we use the notation that was set up in this context.
For each $x \in \g$, we can write $x=\sum_{i=1}^{h}x_ie_i$, for some $x_i \in \lri$. Let $\mathbf{x}=(x_1, \dots, x_h) \in \lri^h$.
The $b \times h$-matrix representing the linear transformation $\ad_x$ is such that its submatrix composed of its first $a$ columns is the transpose $A(\mathbf{x})^{\textup{tr}}$ of the $A$-commutator matrix of~$\Lambda$, and the remaining columns have only zero entries. 

We observe that the above mentioned integrals of~\cite{Ro17} are taken over $\lri \times \lri^a$ instead of $\p \times \W_a$ as in~\eqref{cctok}. Formula~\eqref{cctok} coincides with the above mentioned $\p$-adic integral due to~\cite[Lemma~2.2.4]{PLphd}.
\end{rmk} 

\begin{exm}\label{Heis}
Let $\mathbf{H}(\ri)$ be the Heisenberg group over~$\ri$ considered in Example~\ref{exintro}. 
The unipotent group scheme~$\mathbf{H}$ is obtained from the $\Z$-Lie lattice \[\Lambda=\langle x_1,x_2, y\mid [x_1,x_2]-y\rangle.\] 
The commutator matrices of $\g=\Lambda(\lri)$ with respect to the ordered sets $\mathbf{e}=(x_1,x_2)$ and $\mathbf{f}=(y)$ are
\[ A(X_1, X_2)=\left[ \begin {array}{c} X_2\\ \noalign{\medskip} -X_1\end {array}  \right] ~\text{ and }~ B(Y)=\left[ \begin {array}{cc} 0&Y\\ \noalign{\medskip}-Y&0\end {array}
\right].\]

The $A$-commutator matrix has rank~$1$ and the $B$-commutator matrix has rank~$2$ over the respective fields of rational functions, that is, $\ua=\ub=1$. 
Moreover, $h=\rk(\g)=3$, and 
\[F_{1}(A(X_1,X_2))=\{-X_1,X_2\}, \hspace{0.5cm} F_2(B(Y))=\{Y^2\}.\] 
In particular, if $(x_1,x_2)\in \W_{2}$, i.e., $v_{\p}(x_1,x_2)=0$, then $\|F_1(A(x_1,x_2))\|_{\p}=1$.
Also, if $y \in \W_1$, then, in particular, $v_{\p}(y^2)=0$, which gives $\|F_2(B(y))\|_{\p}=1$. 

It follows from Proposition~\ref{intpadic} that 
\begin{align*}
  \rlvlzf_{\mathbf{H}(\lri)}(s_1,s_2)	&=\frac{1}{1-q^{2-s_2}}\left(1+(1-q^{-1})^{-1}\int_{(w, y) \in \p \times \W_{1}}|w|_{\p}^{s_1+s_2-2}d\mu\right),\\
  \clvlzf_{\mathbf{H}(\lri)}(s_1,s_2)	&=\frac{1}{1-q^{1-s_2}}\left(1+(1-q^{-1})^{-1}\int_{(w, x_1,x_2) \in \p \times \W_{2}}|w|_{\p}^{s_1+s_2-3}d\mu\right).\\
\end{align*}
Expressions~\eqref{Heisirr} and~\eqref{Heiscc} for $\rlvlzf_{\mathbf{H}(\lri)}(s_1,s_2)$ and $\clvlzf_{\mathbf{H}(\lri)}(s_1,s_2)$ given in Example~\ref{exintro} are then consequence of the following well-known fact: for $k \in \N$ and $t \in \C$, 
\begin{equation}\label{integral1} 
 \int_{w\in\p^k}|w|_{\p}^{t}d\mu=\frac{q^{-k(t+1)}(1-q^{-1})}{1-q^{-k(t+1)}},
\end{equation}
provided the $\p$-adic integral on the left-hand side converges.
\end{exm}

\subsection{Twist representation zeta functions}\label{irr}
In this section, we assume that~$\G$ is the unipotent group scheme associated to a nilpotent $\ri$-Lie lattice~$\Lambda$ of nilpotency class~$2$ without the assumption $\Lambda'\subseteq 2\Lambda$, constructed as in~~\cite[Section~2.4.1]{StVo14}.
We provide a univariate specialization of the bivariate representation zeta function of $\G(\lri)$ which results in the twist representation zeta function of this group. 

According to~\cite[Corollary~2.11]{StVo14}, the twist representation zeta function of $\G(\lri)$ is given by
\[\rtzeta_{\G(\lri)}(s)=1+\pint_{B}(-\tfrac{s}{2}\mathbf{1}_{\ub}, u_Bs-b-1),\]
where $b=\rk(\g')$, $2u_B=\max\{\rk_{\text{Frac}(\lri)}B(\mathbf{z}) \mid \mathbf{z}\in \lri^b\}$, $\mathbf{1}_{\ub}=(1, \dots, 1) \in \Z^{\ub}$, and $\pint_{B}(\underline{r},t)$ is the integral 
$\pint_{\mathcal{R}}(\underline{r},t)$ given in~\eqref{Vo10as} with $\mathcal{R}(\underline{Y})$ being regarded as the $B$-commutator matrix $B(\underline{Y})$ of~$\g$. 
Recall that $r=\rk(\g/\g')=h-b$. Proposition~\ref{intpadic} states  
\[(1-q^{r-s_2})\rlvlzf_{\G(\lri)}(s_1,s_2)=1+\pint_{B}\left(\tfrac{-2-s_1}{2}\mathbf{1}_{\ub},\ub s_1+s_2+2\ub-h-1\right).\]
Comparing the expressions for $\rtzeta_{\G(\lri)}(s)$ and $(1-q^{r-s_2})\rlvlzf_{\G(\lri)}(s_1,s_2)$, we obtain the desired specialization. 
\begin{pps}\label{repzeta} If $\G(\lri)$ has nilpotency class~$2$, then 
\begin{equation*}
(1-q^{r-s_2})\rlvlzf_{\G(\lri)}(s_1,s_2)\mid_{\substack{s_1\to s-2\\ s_2\to r\phantom{-2}}}=\rtzeta_{\G(\lri)}(s),
\end{equation*}
provided both the left-hand side and the right-hand side converge.
\end{pps}
In the following example, we exhibit a $\mathcal{T}$-group of nilpotency class~$3$ whose bivariate representation zeta function does not specialize to its twist representation zeta function. 
\begin{exm}\label{irr3}
Consider the following free nilpotent $\Z$-Lie lattice on~$2$ generators of class~$3$ 
\[\f_{3,2}=\langle x_1,x_2,y,z_1,z_2 \mid i,j\in \{1,2\}: ~[x_1,x_2]-y,~[y,x_i]-z_i, ~[z_i,x_j], ~[z_i,y], ~[z_1,z_2]  \rangle.\]
Let~$\mathfrak{F}_{3,2}$ denote the unipotent group scheme obtained from~$\f_{3,2}$, and denote by~$\z_{3,2}$ and by~$\f'_{3,2}$ the center and 
the derived Lie lattice of~$\f_{3,2}$, respectively. 

The $B$-commutator matrix of~$\f_{3,2}$ with respect to $\mathbf{e}=(y,x_1,x_2)$ and $\mathbf{f}=(z_1,z_2,y)$ is
\[
B(Y_1,Y_2,Y_3)=\left[ \begin {array}{ccc} 0&Y_1&Y_2\\ \noalign{\medskip}-Y_1&0&Y_3
\\ \noalign{\medskip}-Y_2&-Y_3&0\end {array} \right] .
\]
Thus, $\ub=1$, $F_0(B(\underline{Y}))=\{1\}$, and $F_2(B(\underline{Y}))\supseteq \{Y_{1}^{2},Y_{2}^{2},Y_{3}^{2}\}$. 
It follows from Proposition~\ref{intpadic} and~\eqref{integral1} that
\begin{align}	
 \rlvlzf_{\mathfrak{F}_{2,3}(\lri)}(s_1,s_2)&=\frac{1}{1-q^{2-s_2}}\left(1+(1-q^{-1})^{-1}\int_{(w,y_1,y_2,y_3)\in\p\times \W_{3}}|w|_{\p}^{s_1+s_2-4}d\mu\right)\nonumber \\
 \label{lvlf23}&=\frac{1-q^{-s_1-s_2}}{(1-q^{2-s_2})(1-q^{3-s_1-s_2})}. \hspace{0.5cm}
\end{align}
By implementing his methods in \textup{Zeta}~\cite{RoZeta}, Rossmann provides in~\cite[Table~1]{Ro17comp} the following formula for the twist representation zeta function of $\mathfrak{f}_{3,2}$---denoted by $L_{5,9}$ in~\cite{Ro17comp}---,~provided $q$ is sufficiently large 
\begin{equation}\label{rf23}\rtzeta_{\mathfrak{F}_{3,2}(\lri)}(s)=\frac{(1-q^{-s})^2}{(1-q^{1-s})(1-q^{2-s})}. 
\end{equation}

Comparing~\eqref{lvlf23} and~\eqref{rf23}, we see that there is no specialization of form~\eqref{specializationirr} for the bivariate representation zeta function of~$\mathfrak{F}_{3,2}(\lri)$ that leads to its twist representation zeta function.

For the sake of completeness, we now calculate the bivariate conjugacy class and the class number zeta functions of~$\mathfrak{F}_{3,2}(\lri)$.
The $A$-commutator matrix of~$\f_{3,2}$ with respect to~$\mathbf{e}$ and~$\mathbf{f}$ is
\[A(X_1,X_2,X_3)= \left[ \begin {array}{ccc} X_2&X_3&0\\ \noalign{\medskip}-X_1&0&X_3
\\ \noalign{\medskip}0&-X_1&-X_2\end {array} \right].\]
Thus, $\ua=2$, $F_0(A(\underline{X}))=\{1\}$, $F_1(A(\underline{X}))=\{-X_1,\pm X_2,X_3\}$, and $F_2(A(\underline{X}))\supseteq \{X_{1}^{2},-X_{2}^{2},X_{3}^{2}\}$.
Hence
\begin{align*}	
 \clvlzf_{\mathfrak{F}_{2,3}(\lri)}(s_1,s_2)&=\frac{1}{1-q^{2-s_2}}\left(1+(1-q^{-1})^{-1}\int_{(w,x_1,x_2,x_3)\in\p\times \W_{3}}|w|_{\p}^{2s_1+s_2-4}d\mu\right)\\
 &=\frac{1-q^{-2s_1-s_2}}{(1-q^{2-s_2})(1-q^{3-2s_1-s_2})}.
\end{align*}

Specialization~\eqref{specialization} yields 
\[\kzeta_{\mathfrak{F}_{2,3}(\lri)}(s)=\frac{1-q^{-s}}{(1-q^{2-s})(1-q^{3-s})}.\] 
This formula agrees with the one given in~\cite[Section~9.3, Table~1]{Ro17}. 
\end{exm}

\subsection{Local functional equations---proof of Theorem~\ref{thmA}}\label{pthmA}
Proposition~\ref{intpadic} assures that, for each $\ast \in \{\irrup, \ccup\}$, almost all local factors of~$\astlvlzf_{\G(\ri)}$ are given by a rational function~$R^{\ast}$ in certain parameters, as stated in Theorem~\ref{thmA}. 

In this section, we conclude the proof of Theorem~\ref{thmA} by showing that the integrals given in Proposition~\ref{intpadic} behave uniformly under base extension and that they satisfy functional equations. 

Fix a nonzero prime ideal~$\p$ satisfying the conditions of Proposition~\ref{intpadic}. 

Let~$L$ be a finite extension of $K=\Frac(\ri)$ with ring of integers~$\ri_L$. For a fixed prime ideal~$\Pp$ of~$\ri_L$ dividing~$\p$, write~$\Lri$ for the localisation~$\ri_{L,\Pp}$. 
Denote the relative degree of inertia by $f=f(\Lri,\lri)$, and hence $|\Lri / \Pp|=q^{f}$. 
Set~$\g_{L}=\Lambda(\Lri)$, and let~$\z_{L}$ and~$\g'_{L}$ be the center and the derived Lie sublattice of~$\g_{L}$, respectively. 
Since~$\ri_L$ is a ring of integers of a number field~$L$, we can choose ordered sets~$\mathbf{e}$ and~$\mathbf{f}$ as the ones of Section~\ref{chcc} such that~$\overline{\mathbf{e}}$ and~$\mathbf{f}$ are bases of~$\g_{L}/\z_{L}$ and~$\g_{L}'$, 
respectively. Let $A(\underline{X})$ and $B(\underline{Y})$ be the commutator matrices of~$\g_{L}$ with respect to~$\mathbf{e}$ and~$\mathbf{f}$; see Definition~\ref{commutator}. 
Consider the functions.
\begin{align*}\widetilde{\rlvlzf_{\G(\Lri)}}(s_1,s_2)&:=1+\Pint_{B}\left((-s_1-2)/2,\ub s_1+s_2+2\ub-h-1\right),\\
\widetilde{\clvlzf_{\G(\Lri)}}(s_1,s_2)&:=1+\Pint_{A}\left(-s_1-1,\ua s_1+s_2+\ua-h-1\right),
\end{align*}
where $\Pint_{B}(\underline{r},t)$ and $\Pint_{A}(\underline{r},t)$ are the integrals given in~\eqref{Vo10as} and~\eqref{Vo10}, respectively.
We have shown in Proposition~\ref{intpadic} that 
\begin{align*}
 \rlvlzf_{\G(\Lri)}(s_1,s_2)&=\frac{1}{1-q^{f(r-s_2)}}\widetilde{\rlvlzf_{\G(\Lri)}}(s_1,s_2)\text{ and }&\\
 \clvlzf_{\G(\Lri)}(s_1,s_2)&=\frac{1}{1-q^{f(z-s_2)}}\widetilde{\clvlzf_{\G(\Lri)}}(s_1,s_2).&
\end{align*}

It is clear that the terms $(1-q^{f(r-s_2)})^{-1}$ and $(1-q^{f(r-s_2)})^{-1}$ are given by rational functions in~$q^f$ and~$q^{-fs_2}$, and that they satisfy functional equations under inversion of~$q^{f}$. Therefore, to prove Theorem~\ref{thmA}, it suffices to show that $\widetilde{\rlvlzf_{\G(\Lri)}}(s_1,s_2)$ and $\widetilde{\clvlzf_{\G(\Lri)}}(s_1,s_2)$ behave uniformly under base extension and satisfy functional equations. 

We first show that the integrands of $\widetilde{\rlvlzf_{\G(\Lri)}}(s_1,s_2)$ and $\widetilde{\clvlzf_{\G(\Lri)}}(s_1,s_2)$ are defined over~$\ri$, that is, that only their  domains of integration vary with the ring~$\Lri$. 

The $\Lri$-bases $\overline{\mathbf{e}}$ and $\mathbf{f}$ are only defined locally, and hence so are the matrices $A(\underline{X})$ and $B(\underline{Y})$.
We must assure that there exist $\Lri$-bases~$\overline{\mathbf{e}}$ and~$\mathbf{f}$ as the ones of Section~\ref{chcc} such that the commutator matrices $A(\underline{X})$ and $B(\underline{Y})$, defined with respect to~$\mathbf{e}$ and~$\mathbf{f}$ are defined over~$\ri$, and hence so are the sets of polynomials $F_j(A(\underline{X}))$ and $F_{2j}(B(\underline{Y}))$.  

Since the matrix $B(\underline{Y})$ is the same as the one appearing in the integrands of~\cite[formula~(2.8)]{StVo14} and $A(\underline{X})$ is obtained in an analogous way,
the argument of~\cite[Section~2.3]{StVo14} also holds in this case. Namely, we choose an $\ri$-basis~$\mathbf{f}$ for a free finite-index $\ri$-submodule of the isolator $i(\Lambda')$ of the derived $\ri$-Lie sublattice of~$\Lambda$; see Section~\ref{chcc}.
By~\cite[Lemma~2.5]{StVo14}, $\mathbf{f}$~can be extended to an $\ri$-basis~$\mathbf{e}$ for a free $\ri$-submodule~$M$ of finite index of~$\Lambda$. 
If the residue characteristic~$p$ of~$\p$ does not divide $|\Lambda:M|$ or $|i(\Lambda'): \Lambda'|$, this basis~$\mathbf{e}$ may be used to 
obtain an $\Lri$-basis for~$\Lambda(\Lri)$, by tensoring the elements of~$\mathbf{e}$ with~$\Lri$. 

\begin{rmk}
The condition ``$p$ does not divide $|i(\Lambda'): \Lambda'|$'' is missing in~\cite{StVo14}, but this omission does not affect the proof of~\cite[Theorem~A]{StVo14}, since this 
condition only excludes a finite number of prime ideals~$\p$.
This was first pointed out in~\cite[Section~3.3]{DuVo14}.
\end{rmk}

We now recall the general integrals given in~\cite[Section~2.1]{Vo10} and show that the integrals~$\widetilde{\rlvlzf_{\G(\Lri)}}(s_1,s_2)$ and~$\widetilde{\clvlzf_{\G(\Lri)}}(s_1,s_2)$ are special cases of such integrals, so that the arguments given in~\cite[Section~4]{AKOV13} assure that they satisfy functional equations.  Recall $[u]=\{1, \dots, u\}$ for $u \in \N$. 

Fix $l,m,n \in \N$. For each $k \in [l]$, let $J_k$ be a finite index set. Fix $I \subseteq [n-1]$. Further, fix non-negative integers~$e_{ikj}$ and finite sets~$F_{kj}(\underline{Y})$ of polynomials over~$\lri$, for $k \in [l], j\in J_k$ and $i \in I$. 
Also, let $\mathcal{W}(\lri)\subseteq \lri^m$ be a union of cosets modulo~$\p^{(m)}$. Define
\begin{equation}\label{Vogen}
 \pints_{\mathcal{W}(\lri),I}(\underline{s})=\int_{\p^{(|I|)}\times \mathcal{W}(\lri)} 
 \prod_{k=1}^{l}\left|\left|\bigcup_{j \in J_k} \left( \prod_{i \in I}x_{i}^{e_{ikj}}\right)F_{kj}(\underline{y})\right|\right|_{\p}^{s_k}d\mu,
\end{equation}
where $\underline{s}=(s_1, \dots, s_l)$ is a vector of complex variables and $\underline{x}=(x_i)_{i \in I}$ and $\underline{y}=(y_1, \dots, y_m)$ are independent integration variables. 

In~\cite[Corollary~2.4]{Vo10}, by studying the transformation of the integral~\eqref{Vogen} under a principalisation~$(Y,h)$ of the ideal $\prod_{k,j}(F_{kj}(\underline{Y}))$, Voll proved a functional equation for~\eqref{Vogen} under inversion of the parameter~$q$ under certain invariance and regularity conditions. In particular, it is required that the principalisation~$(Y,h)$ has good reduction modulo~$\p$, a condition that is satisfied for almost all prime ideals~$\p$. 

We now relate the integrals of Proposition~\ref{intpadic} with the general integral~\eqref{Vogen}.

Set $I=\{1\}$ and write $x_1=x$. Set~$n=b$, $m=b^2$, $l=2\ub+1$, and $J_k=\{1,2\}$, if $k \in [\ub]$ and $J_{k}=\{1\}$ if $\ub<k\leq 2\ub+1$.
We also set $\mathcal{W}(\Lri)=\Gl_b(\Lri)$, and 
\begin{table*}[h]
	\centering
		\begin{tabular}{ c | c | c | c }
				$k$ & $j$ & $F_{kj}$ & $e_{1kj}$ \\
				\hline
				$\leq \ub$ & $1$ & $F_{2k}(B(\underline{y}))$  & $0$ \\
				
				$\ub<k\leq 2\ub$ & $1$ & $F_{2(k-1-\ub)}(B(\underline{y}))$ & $0$ \\
				
				$2\ub+1$ & $1$ & $\{1\}$ & $1$ \\
				$\leq \ub$ & $2$ & $F_{2(k-1)}(B(\underline{y}))$ & 2
		\end{tabular}.
\end{table*}

We see that, with this set-up, the integral~\eqref{Vogen} is equal to
\begin{align}
 \label{intb2}\pints_{\Gl_b(\Lri),\{1\}}(\underline{s})=\int_{\Pp\times \Gl_b(\Lri)} &\|x\|_{\Pp}^{s_{2\ub+1}}\cdot\\
										  \prod_{k=1}^{\ub}\|F_{2k}(B(\underline{Y}))\cup x^2F_{2(k-1)}(B(\underline{Y}))\|_{\Pp}^{s_k}
										  &\prod_{k=\ub+1}^{2\ub}\|F_{2(k-1-\ub)}(B(\underline{Y}))\|_{\Pp}^{s_{k}}d\mu.\nonumber
\end{align}
Set
\[\mathbf{a}_{1}^{\irrup}=(-\frac{1}{2}\mathbf{1}_{\ub},\frac{1}{2}\mathbf{1}_{\ub},\ub), \hspace{0.3cm} \mathbf{a}_{2}^{\irrup}=(\mathbf{0}_{\ub}, \mathbf{0}_{\ub},1),\]
\[\mathbf{b}^{\irrup}=(-\mathbf{1}_{\ub}, \mathbf{1}_{\ub}, 2\ub-h-1),\]
where $\mathbf{1}_{\ub}=(1, \dots, 1)\in \Z^{\ub}$ and $\mathbf{0}_{\ub}=(0,\dots,0)\in\Z^{\ub}$.

Although the domain of integration of the integral~\eqref{intb2} involves $\Gl_b(\Lri)$, the integrand only depends on the entries of the first column, say, as explained in~\cite[Section~4.1.3]{AKOV13}. 
It follows that 
\[\widetilde{\rlvlzf_{\G(\Lri)}}(s_1,s_2)=1+\frac{1}{1-q^{-f}}\left(\prod_{k=1}^{b-1}(1-q^{-fk})\right)^{-1}\hspace{-0.2cm}
\pints_{\Gl_b(\Lri),\{1\}}\left(\mathbf{a}_{1}^{\irrup}s_1+\mathbf{a}_{2}^{\irrup}s_2+\mathbf{b}^{\irrup}\right).\]

Analogously, for~$n=a$, $m=a^2$, one can find appropriate data $l\in \N$, $J_k$, $e_{1jk}$, and $F_{kj}(\underline{X})$ such that 
\[\widetilde{\clvlzf_{\G(\Lri)}}(s_1,s_2)=1+\frac{1}{1-q^{-f}}\left(\prod_{k=1}^{a-1}(1-q^{-fk})\right)^{-1}\hspace{-0.2cm}
\pints_{\Gl_a(\Lri),\{1\}}(\mathbf{a}_{1}^{\ccup}s_1+\mathbf{a}_{2}^{\ccup}s_2+\mathbf{b}^{\ccup}),\]
for $\mathbf{a}_{1}^{\ccup}=(-\mathbf{1}_{\ua},\mathbf{1}_{\ua},\ua)$, $\mathbf{a}_{2}^{\ccup}=(\mathbf{0}_{\ua}, \mathbf{0}_{\ua},1)$, 
$\mathbf{b}^{\ccup}=(-\mathbf{1}_{\ua}, \mathbf{1}_{\ua}, \ua-h-1)$.

Theorem~\ref{thmA} then follows by the arguments given in~\cite[Section~4]{AKOV13}. 

\section*{Acknowledgements}
 I am thankful to my advisor Christopher Voll, and to Tobias Rossmann and Yuri Santos Rego for helpful discussions and comments on this work. I am also grateful to the anonymous referee for valuable comments and suggestions.
 I gratefully acknowledge financial support from the DAAD for this work.
 \def\cprime{$'$} \def\cprime{$'$}
 \providecommand{\bysame}{\leavevmode\hbox to3em{\hrulefill}\thinspace}
 \providecommand{\MR}{\relax\ifhmode\unskip\space\fi MR }

\printbibliography

\end{document}